\documentclass[12pt,a4paper]{article}
\usepackage{amsmath,amsfonts,amssymb,amsthm,graphicx}
\usepackage[update]{epstopdf}
\usepackage[arrow, matrix, curve]{xy}
\usepackage{tikz}
\newtheorem{theorem}{Theorem}
\newtheorem{lemma}{Lemma}
\newtheorem{definition}{Definition}

\newtheorem{remark}{Remark}
\newtheorem{example}{Example}
\usepackage[utf8]{inputenc}

\newcommand\set[1]{\left\{#1\right\}}
\newcommand\abs[1]{\left\lvert#1\right\rvert}
\newcommand\norm[1]{\left\lVert#1\right\rVert}
\newcommand\map[3]{#1:#2\to#3}
\newcommand\NRp{\mathbb{R}_{\ge0}}
\newcommand\NRpp{\mathbb{R}_{>0}}

\title{Gromov meets  Phylogenetics --- new Animals for the Zoo of Biocomputable Metrics on Tree Space}
\author{Volkmar Liebscher}
\date{\day22\month04\year2015\relax\today,13:14:54
}

\begin{document} 
\maketitle
\begin{abstract}
  We present a new class of metrics for unrooted phylogenetic $X$-trees derived from  the Gromov-Hausdorff distance for (compact) metric spaces. These metrics can be efficiently computed by linear or quadratic programming. They are robust under NNI-operations, too.  The local behavior of the metrics shows that they are different from any formerly introduced metrics. The performance of the metrics is briefly analised on random weighted and unweighted trees as well as random caterpillars.   
\end{abstract}

\section{Introduction}

The idea for this paper came from a talk of Michelle Kendall at the Portobello conference 2015, see \cite{Ken15}. Basically, she postulated, that the  biological information is essentially  encoded in the collection of   distances between the MRCA of two taxa and the root. If the trees were ultrametric, we could equivalently just collect the distances between all pairs of taxa.  That leads to our rationale:
\begin{center}
  Instead of trees we compare the induced  metric spaces.
\end{center}
This approach is feasible  since by the work of Buneman  \cite{Bun71,Bun74}, see also \cite{Zar65} for the unweighted case, we can identify tree-induced metrics among all metrics by the famous four point conditions. 

In fact, this rationale must have been  behind  the invention of the $\ell^1$ and $\ell^2$ path difference  distances  \cite{WC71,PH85} already. Below we invent also an $\ell^\infty$ version of that metrics, too.

For (compact) metric spaces there is the well-known Gromov-Hausdorff distance
\begin{equation}\label{eq:gromov}
  D^{GH}((X,d),(X',d'))=\inf_{\varphi,\varphi'}\rho^H(\varphi(X),\varphi'(X'))
\end{equation}
where the infimum is taken over all isometric embeddings of $X,X'$ into a common metric space, and $\rho^H$ is the Hausdorff metric on the compacts of that space.

By our rationale, this definition induces  a metric on the space of all weighted trees. But, we cannot distinguish  trees which yield isomorphic metric spaces, i.e. with permuted labels.  Since  our aim is  to compare trees with the same taxon sets we have to adapt the metric (\ref{eq:gromov}) to our situation.  That makes the definition more complicated (see section 2) since we have to match the leaf labels, but the idea of embeddings remains. Fortunately, our metric becomes efficiently computable only this way.  Simply, we must   substitute the Hausdorff metric   in (\ref{eq:gromov}). Since there are several reasonable candidates for that, we derive  even three different metrics. In all these cases,   the value of the metric is the solution of a  linear or quadratic program.

 Clearly, our approach is more general  and abstract than other definitions of phylogenetic metrics to be discussed soon.  Those are using much more the internal structure of trees. Usually, more abstract approaches have more potential to generalise and to adapt to special situations. Still, this has to be worked out in the present situation. 

For mathematical reasons, it is very convenient to include also semimetrics on the taxon set  in the definition. This situation may occur in phylogeny if we do not resolve the topology by all singleton splits, see for instance \cite{RF81}.     

\medskip
What about other phylogenetic metrics? The simplest one, though  not the oldest   one, seems to be the Robinson-Foulds distance \cite{RF79,RF81}. That one is easy and efficiently to compute  in linear time \cite{Day85} or even in sublinear approximation \cite{PGM07}. But, it has no much power in discriminating trees, since a lot of trees with similar biological meaning have  distance equal to the diameter of the unweighted tree space. Much nearer to biology  seems to be a variant of the Robinson-Foulds  distance,  the weighted matching distance. It captures similarity of splits which entails a lot of biology and is still computable in subcubic time \cite{BM12,LRM12}.

A quite natural, biology adapted  way of capturing tree similarity  is provided by  the tree rearrangement metrics. There are different basic transformations giving rise to the  NNI-distance \cite{Rob71}, SPR-distance and TBR-distance. Unfortunately, computation of those distances is NP-hard and only feasible for small trees \cite{Das+97,AS01,BJ10}. Some fixed parameter approach to compute the (rooted) SPR, e.g,  was done in \cite{WBZ15}. Even more at the heart of evolution  is the maximum parsimony distance \cite{FK14}.  Still it is NP-hard to compute that distance, even over binary unweigthed phylogenetic trees \cite{FK14,FK14b}.

A good alternative to the tree rearrangement  metrics  is  the quartet distance \cite{Est85}. It is much more  biologically plausible than the Robinson-Foulds distance and also efficiently computable \cite{Bro01}.

For weighted phylogenetic trees there is the  euclidean type (geodesic) distance on tree space introduced by \cite{BHV01}. The crucial observation was that in a natural way tree space is a category zero ($\mathrm{CAT}(0)$) space (or space of nonpositive curvature) introduced by Gromov. Essentially this property implies uniqueness of geodesics.   It was an open problem for some years how to compute the geodesic distance on tree space efficiently. Yet, by \cite{OP11} we have a polynomial time algorithm now.  The $\mathrm{CAT}(0)$ idea was used again in  \cite{DG15} to develop metrics   for ultrametric spaces. Again, efficient computation of the geodesics is possible for at least one of the metrics.  As observed in that work,  different, but natural, parametrisations may yield different geodesics.

Recently, \cite{Ken15} returned back to the idea of \cite{WC71}, \cite{PH85} and \cite{BHV01} in application to weighted rooted trees, considering  all distances of MRCAs of pairs of taxa to the root. She also proposes to weight different MRCAs  by their depth respective  the root. That idea may be similar to the weighted matching distance \cite{BM12,LRM12}. 

A good review about recent developments in polynomial time computable metrics on unweighted phylogenetic trees is contained in \cite{BGW12}. There also    complete \texttt{java} implementations are provided. For simplicity, we implemented our metrics in \texttt{R} first. 

\medskip
After having this short overview over this situation, we would like to introduce  the notion of a \emph{biocomputable} metric. That should be a metric on phylogenetic tree space which is computable in polynomial time and which is able to capture biological similarity. Preferably, it should be also defined for weighted phylogenetic trees. So, let's see how  Gromovs idea of joint embeddings helps to reach  that goal \dots

\section{Definition}
For a  set $X$ denote by $M(X)$ the set of all semimetrics on $X$, i.e. all $\map \rho{X\times X}\NRp$ such that for all $x,y,z\in X$ $\rho(x,x)=0$, $\rho(x,y)=\rho(y,x)$ and $\rho(x,y)\le\rho(x,z)+\rho(z,y)$. Frequently, we describe such a semimetrics in an equivalent fashion by $\map\rho{\binom X2}\NRp$ where $\binom X2=\set{\set{x,y}:x,y\in X,x\ne y}$. Accordingly, $M_{>0}(X)$ denote the set of all metrics on $X$. Further, let $\mathcal M=\set{(X,\rho):\#X<\infty,\rho\in M(X)}$ denote the set of all finite semimetric spaces. Isometries $\map\varphi {(X,\rho)}{(Y,\rho')}$ preserve the semimetrics, i.e. for all $x,y\in X$ $ \rho(x,y)=\rho'(\varphi(x),\varphi(y))$.  

Frequently we need identical copies of our taxon set $X$. Under slight abuse of notation, we will denote them  $X'=\set{x':x\in X}$ and $X''=\set{x'':x\in X}$.  

\begin{definition}
  Let  $X$ be  a finite set. Then we define three functions $D_1,D_2,D_\infty$ on $M(X)\times M(X) $ by

  \begin{eqnarray*}
    D_1(\rho,\rho')&=&\inf_{Y,\varphi,\psi}\sum_{x\in X}\bar d(\varphi(x),\psi(x))
\\
    D_2(\rho,\rho')^2&=&\inf_{Y,\varphi,\psi}\sum_{x\in X}\bar d(\varphi(x),\psi(x))^2\\
    D_\infty(\rho,\rho')&=&\inf_{Y,\varphi,\psi}\max_{x\in X}\bar d(\varphi(x),\psi(x))
  \end{eqnarray*}
where the infimum is taken over all     
$(Y,\bar d)\in\mathcal M$ and all isometries  
$\map\varphi{(X,\rho)}{(Y,\bar d)}$, $\map\psi{(X,\rho')}{(Y,\bar d)}$.
\end{definition}

\begin{remark}
  $D_\infty$ is nearest to the Gromov-Hausdorff distance, which  we should implement via
  \begin{equation}\label{eq:Gromovhere}
    D_{GH}(\rho,\rho')=\inf_{Y,\varphi,\psi}\max_{x\in X}\min_{y\in X}\bar d(\varphi(x),\psi(y)).
  \end{equation}
On the other hand, we think that the  $\ell^1$-like metric $D_1$  is kind of natural for trees.  The   euclidean geometry  which is  the basis of  $D_2$ might be good for having unique geodesics. This feature is very convenient and at the heart of the proposals of \cite{BHV01} and \cite{DG15}.
\end{remark}
Let us simplify the optimisation problems present in the definitions of $D_i$ a bit.  In fact, it is enough to have just one model space $Y$. 
For $\rho,\rho'\in M(X)$ define the space $E(\rho,\rho')$ of their extensions
\begin{displaymath}
  E(\rho,\rho')=\set{\bar d\in M(X\cup X'): \forall x,y\in X:\bar d(x,y)=\rho(x,y),\bar d(x',y')=\rho'(x,y)}.
\end{displaymath}
Further, $\norm\cdot_i$ denotes the usual $\ell^i-$norm on $\mathbb{R}^X$. 

\begin{lemma}\label{lem:DibyextensionXX'}
 For $i=1,2,\infty$ 
  \begin{equation}\label{eq:DibyextensionXX'}
     D_i(\rho,\rho')=\inf_{\bar d\in E(\rho,\rho')}\norm{(\bar d(x,x'))_{x\in X}}_i
  \end{equation}
\end{lemma}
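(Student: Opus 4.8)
The plan is to prove the two inequalities separately, exploiting that the right-hand side is exactly the restriction of the defining infimum of $D_i$ to one distinguished model space, namely $Y=X\cup X'$ equipped with an extension $\bar d\in E(\rho,\rho')$ and with the inclusion maps playing the role of the isometries.

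First I would establish $D_i(\rho,\rho')\le\inf_{\bar d\in E(\rho,\rho')}\norm{(\bar d(x,x'))_{x\in X}}_i$. Given any $\bar d\in E(\rho,\rho')$, take $(Y,\bar d)=(X\cup X',\bar d)\in\mathcal M$ and let $\map\varphi{(X,\rho)}{(Y,\bar d)}$, $\map\psi{(X,\rho')}{(Y,\bar d)}$ be the inclusions $\varphi(x)=x$, $\psi(x)=x'$. The defining condition of $E(\rho,\rho')$ says precisely that $\bar d(x,y)=\rho(x,y)$ and $\bar d(x',y')=\rho'(x,y)$, so $\varphi$ and $\psi$ are isometries and the triple is admissible in the definition of $D_i$. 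Since $\bar d(\varphi(x),\psi(x))=\bar d(x,x')$, the objective value of this triple equals the $\ell^i$-norm on the right. As the infimum defining $D_i$ ranges over a strictly larger family of configurations, the inequality follows.

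For the reverse inequality I would show that an arbitrary admissible configuration can be transferred onto $X\cup X'$ without changing the objective, by pulling back the metric. Let $(Y,\bar d)\in\mathcal M$ and isometries $\varphi,\psi$ be given. Define $\map\Phi{X\cup X'}{Y}$ by $\Phi(x)=\varphi(x)$, $\Phi(x')=\psi(x)$, and set $\tilde d(u,v)=\bar d(\Phi(u),\Phi(v))$ for $u,v\in X\cup X'$. The pullback of a semimetric along any map is again a semimetric, since symmetry, the triangle inequality and vanishing on the diagonal are all inherited pointwise; hence $\tilde d\in M(X\cup X')$. Because $\varphi,\psi$ are isometries we get $\tilde d(x,y)=\bar d(\varphi(x),\varphi(y))=\rho(x,y)$ and $\tilde d(x',y')=\rho'(x,y)$, so $\tilde d\in E(\rho,\rho')$. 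Finally $\tilde d(x,x')=\bar d(\varphi(x),\psi(x))$, whence the right-hand-side objective for $\tilde d$ equals the objective of the original configuration. Taking the infimum over all configurations yields $\inf_{\bar d\in E(\rho,\rho')}\norm{(\bar d(x,x'))_{x\in X}}_i\le D_i(\rho,\rho')$.

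Combining the two inequalities gives the claimed identity for each $i\in\set{1,2,\infty}$, and the argument is uniform in $i$ because it manipulates only the vector $(\bar d(x,x'))_{x\in X}$ and never the specific norm applied to it. The one genuine point to check — and the main, though mild, obstacle — is that $\tilde d$ really is a semimetric in $M(X\cup X')$ and restricts correctly to $\rho$ on $X$ and $\rho'$ on $X'$; in particular this must survive the fact that $\varphi$ and $\psi$ need not be injective in the semimetric setting, which is exactly why the pullback construction (rather than identification of points) is the right tool. Once that is verified the equality is immediate.
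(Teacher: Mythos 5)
Your proof is correct and takes essentially the same approach as the paper: the easy inequality comes from viewing any $\bar d\in E(\rho,\rho')$ as a model space $Y=X\cup X'$ with the two inclusions as isometries, and the reverse inequality comes from pulling the model-space semimetric back to $X\cup X'$ along $x\mapsto\varphi(x)$, $x'\mapsto\psi(x)$, which is exactly the paper's explicit construction of an element of $E(\rho,\rho')$ with the same objective value. Your closing remark that the pullback construction also covers non-injective $\varphi,\psi$ (possible in the semimetric setting) is a point the paper leaves implicit.
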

\begin{proof}
  Note that  $\le$ holds trivially.

On the other side, for $(Y,\tilde d)\in\mathcal M$  and isometries 
$\map\varphi{(X,\rho)}{(Y,\tilde d)}$,  $\map\psi{(X,\rho')}{(Y,\tilde d)}$ define $\map{ \bar d}{\binom{X\cup X'}2}\NRp$ by
\begin{eqnarray*}
  \bar d(x,y)&=&\rho(x,y)=\tilde d(\varphi(x),\varphi(y))\\
  \bar d(x',y')&=&\rho'(x,y)=\tilde d(\psi(x),\psi(y))\\
  \bar d(x,y')&=&\tilde d(\varphi(x),\psi(y))
\end{eqnarray*}
for all $x,y\in X$. Now $\tilde d\in M(Y)$ implies $\bar d\in M(X\cup X')$. The $\ge$ in (\ref{eq:DibyextensionXX'}) follows now from
\begin{displaymath}
  \norm{(\bar d(x,x'))_{x\in X}}_i=\norm{(\tilde d(\varphi(x),\psi(x)))_{x\in X}}_i
\end{displaymath}
\end{proof}
Observe that the previous lemma is at the heart of the computation of the distances since that amounts just to the minimization of a convex function over the convex set $E(\rho,\rho')$.
\begin{lemma}\label{lem:di*}
  For $i=1,2,\infty$ there exists a $d^*_i\in E(\rho,\rho') \subset M(X\cup X')$ such that
  \begin{displaymath}
    D_i(\rho,\rho')=\norm{(d_i^*(x,x'))_{x\in X}}_i
  \end{displaymath}
\end{lemma}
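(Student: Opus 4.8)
The plan is to show that the infimum appearing in Lemma~\ref{lem:DibyextensionXX'} is actually attained, so that a minimiser $d_i^*\in E(\rho,\rho')$ exists. I would argue by a compactness-and-coercivity argument, with one twist that exploits the special structure of $E(\rho,\rho')$.

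First I would fix a minimising sequence $(\bar d_n)_n$ in $E(\rho,\rho')$, i.e.\ one with $\norm{(\bar d_n(x,x'))_{x\in X}}_i\to D_i(\rho,\rho')$. Since the objective is itself the $\ell^i$-norm of the diagonal vector and it converges to the finite value $D_i(\rho,\rho')$, the diagonal vectors $(\bar d_n(x,x'))_{x\in X}$ are bounded, say by a constant $C$. The key observation is that this bound on the diagonal entries forces a bound on every other free entry: for $x\ne y$ the triangle inequality in $X\cup X'$ gives
\begin{displaymath}
  \bar d_n(x,y')\le\bar d_n(x,x')+\bar d_n(x',y')=\bar d_n(x,x')+\rho'(x,y),
\end{displaymath}
so the cross distances are bounded by $C+\max_{x,y\in X}\rho'(x,y)$. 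The remaining entries of $\bar d_n$ are frozen to the values of $\rho$ and $\rho'$. Hence the whole sequence $(\bar d_n)_n$ lives in a bounded subset of $\mathbb{R}^{\binom{X\cup X'}2}$.

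Next I would invoke Bolzano--Weierstrass to extract a subsequence converging to some $d_i^*\in\mathbb{R}^{\binom{X\cup X'}2}$. Since $E(\rho,\rho')$ is cut out by the extension equalities $\bar d(x,y)=\rho(x,y)$ and $\bar d(x',y')=\rho'(x,y)$ together with non-negativity and the (non-strict) triangle inequalities, it is a closed set; therefore $d_i^*\in E(\rho,\rho')$. Finally, continuity of $\norm\cdot_i$ yields $\norm{(d_i^*(x,x'))_{x\in X}}_i=D_i(\rho,\rho')$, which is the assertion.

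The point that needs care --- and the reason the plain slogan ``minimise a continuous function over a closed set'' is not by itself enough --- is that $E(\rho,\rho')$ is unbounded: the cross entries $\bar d(x,y')$ with $x\ne y$ may be arbitrarily large. Thus the main step is really the propagation of the diagonal bound to all free coordinates via the triangle inequality above; once boundedness of the minimising sequence is secured, closedness of $E(\rho,\rho')$ and continuity of the norm finish the proof routinely. For $i=2$ one may alternatively note that a continuous, coercive, strictly convex objective on the nonempty closed convex set $E(\rho,\rho')$ attains a unique minimum, but the elementary compactness argument covers all three cases at once.
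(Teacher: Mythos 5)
Your proof is correct and takes essentially the same route as the paper: the paper's one-line argument asserts that the sublevel sets of the convex function $\norm\cdot_i$ on $E(\rho,\rho')$ are compact, which is exactly the boundedness-plus-closedness fact you establish via a minimising sequence. In particular, your triangle-inequality step bounding the cross entries $\bar d_n(x,y')\le\bar d_n(x,x')+\rho'(x,y)$ is precisely the justification hidden behind the paper's ``clearly''.
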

\begin{proof}
  Clearly, the sublevel sets of the convex function $\norm\cdot_i$ are compact on the convex space $E(\rho,\rho')$. Thus there must exist a minimal point  of that function. 
\end{proof}

\begin{theorem}\label{th:metrics}
  $D_i$, $i=1,2,\infty$ are complete metrics on $M(X)$. 
\end{theorem}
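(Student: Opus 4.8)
The plan is to check the three metric axioms and then to deduce completeness from a two-sided comparison of each $D_i$ with a fixed norm on the finite-dimensional ambient space $\mathbb R^{\binom X2}\supset M(X)$. Symmetry is immediate: interchanging $\rho$ and $\rho'$ merely relabels $X\leftrightarrow X'$ in $E(\rho,\rho')$ and leaves every quantity $\bar d(x,x')$ unchanged, so $D_i(\rho,\rho')=D_i(\rho',\rho)$. Throughout write $\delta=\max_{x,y\in X}\abs{\rho(x,y)-\rho'(x,y)}=\norm{\rho-\rho'}_\infty$ for the ambient sup-distance.

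I would first obtain the comparison estimates $\tfrac12\delta\le D_i(\rho,\rho')\le\tfrac12(\#X)^{1/i}\,\delta$ (reading $(\#X)^{1/\infty}=1$). For the lower bound, any extension $\bar d\in E(\rho,\rho')$ satisfies, by two applications of its triangle inequality, $\abs{\rho(x,y)-\rho'(x,y)}=\abs{\bar d(x,y)-\bar d(x',y')}\le\bar d(x,x')+\bar d(y,y')\le 2\norm{(\bar d(z,z'))_{z\in X}}_\infty\le 2\norm{(\bar d(z,z'))_{z\in X}}_i$; taking the infimum over $\bar d$ gives $D_i(\rho,\rho')\ge\tfrac12\delta$ for every $i$, and in particular $D_i(\rho,\rho')=0$ forces $\rho=\rho'$, the only nontrivial half of definiteness. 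For the matching upper bound I would exhibit one explicit extension: build on $X\cup X'$ the weighted graph with edges $\set{x,y}$ of length $\rho(x,y)$, edges $\set{x',y'}$ of length $\rho'(x,y)$, and a bridge $\set{x,x'}$ of length $\delta/2$ for each $x$, and let $\bar d$ be its shortest-path semimetric. The point to check is that the bridge weight $\delta/2$ is large enough that no excursion into the opposite copy shortens a within-copy distance; since $\rho'(z,u)+\delta\ge\rho(z,u)$, any two-bridge detour costs at least what it saves, so $\bar d$ restricts to $\rho$ on $X$ and to $\rho'$ on $X'$, i.e. $\bar d\in E(\rho,\rho')$, with $\bar d(x,x')=\delta/2$. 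This shows $E(\rho,\rho')\ne\emptyset$, gives the upper bound, and yields $D_i(\rho,\rho)=0$.

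The triangle inequality is the main step, and I would prove it by gluing optimal extensions. Using Lemma \ref{lem:di*}, pick minimizers $\bar d_1\in E(\rho,\rho')$ and $\bar d_2\in E(\rho',\rho'')$ and amalgamate them over the shared middle copy $X'$: define $\hat d$ on $X\cup X'\cup X''$ to agree with $\bar d_1$ on $X\cup X'$, to agree with $\bar d_2$ on $X'\cup X''$, and to set $\hat d(x,y'')=\min_{z\in X}\bigl(\bar d_1(x,z')+\bar d_2(z',y'')\bigr)$ for $x\in X$, $y''\in X''$. The crux is the gluing lemma, namely that $\hat d$ is again a semimetric; I would verify this by a short case analysis of its triangle inequality according to how the three arguments distribute among $X,X',X''$, using that $\bar d_1$ and $\bar d_2$ agree on $X'$ (both equal $\rho'$). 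Restricting $\hat d$ to $X\cup X''$ then produces an element of $E(\rho,\rho'')$, and the choice $z=x$ in the defining minimum gives the pointwise bound $\hat d(x,x'')\le\bar d_1(x,x')+\bar d_2(x',x'')$. Minkowski's inequality for $\norm\cdot_i$ closes the argument: $D_i(\rho,\rho'')\le\norm{(\hat d(x,x''))_{x\in X}}_i\le\norm{(\bar d_1(x,x'))_{x\in X}}_i+\norm{(\bar d_2(x',x''))_{x\in X}}_i=D_i(\rho,\rho')+D_i(\rho',\rho'')$.

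Completeness then follows formally. The two comparison bounds say that each $D_i$ is bi-Lipschitz equivalent to the ambient sup-norm on $\mathbb R^{\binom X2}$. Since $M(X)$ is cut out of this finite-dimensional space by the non-strict (in)equalities defining a semimetric, it is closed, hence complete for that norm, and bi-Lipschitz equivalence transports completeness to every $D_i$. I expect the genuine work to lie in the gluing lemma and in the distance-preservation check for the bridge construction; the remaining steps are bookkeeping.
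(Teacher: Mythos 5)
Your proof is correct, and its skeleton coincides with the paper's: symmetry is dismissed the same way, the triangle inequality is obtained by amalgamating two optimal extensions over the middle copy $X'$ and then applying Minkowski's inequality, and completeness is transported from the sup-norm on $\mathbb{R}^{\binom X2}$ by metric comparison. The differences lie in how you establish the ingredients, and they make your argument self-contained where the paper delegates. For the gluing step the paper invokes \cite{Cri08} or its Lemma \ref{lem:biextension}, which is proved in the appendix via the minimal-cycle extension criterion (Theorem \ref{th:metextmin}); you instead write the glued semimetric explicitly as $\hat d(x,y'')=\min_{z}\bigl(\bar d_1(x,z')+\bar d_2(z',y'')\bigr)$ and check the triangle inequality by cases, which works precisely because $\bar d_1$ and $\bar d_2$ agree on $X'$ --- this is the standard amalgamation lemma, proved by hand. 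For completeness the paper defers to Lemma \ref{lem:complete}, which rests on Theorem \ref{th:PDcomp}, whose upper estimate in turn uses the quadrangle inequalities (Theorem \ref{th:quadrin}, also proved in the appendix); you instead prove the two-sided comparison $\frac12\norm{\rho-\rho'}_\infty\le D_i(\rho,\rho')\le\frac12(\#X)^{1/i}\norm{\rho-\rho'}_\infty$ directly, the upper bound coming from the shortest-path semimetric of your bridge graph with bridges of length $\frac12\norm{\rho-\rho'}_\infty$ --- in effect a hands-on special case of Theorem \ref{th:quadrin} for a constant vector $\delta$, and your crossing argument (each excursion into the other copy costs at least the within-copy distance it replaces) is sound. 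Two further small gains of your route: the lower comparison bound yields definiteness for all extensions at once, so you do not need the minimizer from Lemma \ref{lem:di*} at that point (the paper does), and the bridge construction explicitly shows $E(\rho,\rho')\ne\emptyset$, a fact that Lemmas \ref{lem:DibyextensionXX'} and \ref{lem:di*} tacitly presuppose. What the paper's route buys is economy through reuse: Theorems \ref{th:quadrin} and \ref{th:PDcomp} are needed anyway for the efficient computation and for the comparison with path-difference metrics, so completeness comes nearly for free; what your route buys is a proof of the theorem that stands entirely on its own.
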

\begin{proof}
  Symmetry is clear.

If $D_i(\rho,\rho')=0$ choose $d_i^*\in  E(\rho,\rho') $ according to  the previous lemma.  Obviously, we obtain $d_i^*(x,x')=0$ for all $x\in X$.  The triangle inequality implies  for all $x,y\in X$
\begin{displaymath}
  \rho(x,y)=d^*_i(x,y)=d_i^*(x',y')=\rho'(x,y)
\end{displaymath}
such that $\rho=\rho'$.

Now let there be $\rho,\rho',\rho''\in M(X)$ and $i$ arbitrary. Using again the above lemma we choose $d_1\in M(X\cup X')$ extending  $\rho,\rho'$ and $d_2\in M(X'\cup X'')$ extending $\rho',\rho''$ such that 
\begin{eqnarray*}
  D_i(\rho,\rho')&=& \norm{(d_1(x,x'))_{x\in X}}_i\\
  D_i(\rho',\rho'')&=& \norm{(d_2(x',x''))_{x\in X}}_i
\end{eqnarray*}
 Then we find, following \cite{Cri08} or   Lemma \ref{lem:biextension},  some $d\in M(X\cup X'\cup X'')$ extending both $d_1,d_2$:
 \begin{displaymath}
   d|_{\binom{X\cup X'}2}=d_1\qquad\text{and}\qquad  d|_{\binom{X'\cup X''}2}=d_2.
 \end{displaymath}
We see now
\begin{eqnarray*}
  D_i(\rho,\rho'')&\le&  \norm{(d(x,x''))_{x\in X}}_i\le \norm{(d(x,x')+d(x',x''))_{x\in X}}_i\\
&\le& \norm{(d(x,x'))_{x\in X}}_i+\norm{(d(x',x''))_{x\in X}}_i\\
&=&\norm{(d_1(x,x'))_{x\in X}}_i+\norm{(d_2(x',x''))_{x\in X}}_i\\
&=& D_i(\rho,\rho')+ D_i(\rho',\rho'').
\end{eqnarray*}

Completeness will be proved later in Lemma \ref{lem:complete}.    
\end{proof}

As already said in the introduction, we are mainly interested in metrics on tree space.
Let $G=(V,E,q)$ be a weighted connected graph, i.e. $E\subseteq \binom V2$ and $\map qE\NRp$. The we define the induced semimetric on $V$ by
\begin{equation}\label{eq:dqG}
  d_G^q(x,y)=\inf\set{\mathrm{len}(p):p\mbox{~path from $x$ to $y$~}}
\end{equation}
As usual,
\begin{displaymath}
  \mathrm{len}(x_0x_1\dots x_m)=\sum_{i=1}^mq(\set{x_{i-1},x_i})
\end{displaymath}
is here the length of the path $(x_0x_1\dots x_m)$. 
For unweighted graphs $(V,E)$ we choose $q(\set{x,y})=1$ for all $\set{x,y}\in E$. 

 So let the tree space $T(X)$  be the set of  all weighted unrooted  generalised phylogenetic $X$-trees. A  weighted unrooted  generalised phylogenetic $X$-tree is a quadruple $(V,E,q,\mu)$, where $\map \lambda XV$ is  a (not necessarily injective) map such that $(V,E)$ is the minimal  tree containing  $\mu(X)$ and $\map qE\NRpp$ is a weight function.   Phylogenetic $X$-trees without weights are included by given all edges after contraction a weight of 1 and by  requiring $\mu$ to be injective. The corresponding  subspace will be denoted $T_1(X)$.  The set of  binary (bifurcating) phylogenetic  $X$-trees is  denoted $T_1^2(X)$. Now   we define for $\tau,\tau'\in T(X)$ under abuse of notation
\begin{displaymath}
  D_i(\tau,\tau')=D_i(d_\tau|_{\binom X2},d_{\tau'}|_{\binom X2})
\end{displaymath}
where $\rho\in M(X)$ is induced by the tree $\tau_1$ and $\rho'$ by $\tau_2$ via (\ref{eq:dqG}).   Again, all three are metrics on tree space. This can be seen from the following result, provided in essence by \cite{Bun71}.
\begin{lemma}\label{lem:4point}
  For $\rho \in M(X)$ there exists an unrooted generalised phylogenetic $X-$tree $\tau\in T(X)$ with $\rho=d_\tau|_{\binom X2}$ if and only if for for all $x,y,z,w\in X$ the four point condition
  \begin{equation}
    \label{eq:4point}
    \rho(x,y)+\rho(z,w)\le \max(\rho(x,z)+\rho(y,w), \rho(x,w)+\rho(y,z))
  \end{equation}
is fulfilled.
\end{lemma}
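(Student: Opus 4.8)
The plan is to prove the classical four-point characterization of tree metrics, attributed here to Buneman. This is a biconditional, so I would handle the two directions separately.

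The statement is the classical characterization of tree (semi)metrics essentially due to Buneman, and I would prove the two implications separately.

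\emph{Necessity.} First I would assume $\rho=d_\tau|_{\binom X2}$ for some $\tau\in T(X)$ and fix four taxa $x,y,z,w$. The minimal subtree of $\tau$ spanning $\mu(x),\mu(y),\mu(z),\mu(w)$ is a quartet of some topology, say $xy|zw$, consisting of two (possibly coinciding) branch vertices joined by a path of length $\ell\ge0$ and pendant paths of lengths $p_x,p_y,p_z,p_w\ge0$ to the four leaves. A direct computation then gives $\rho(x,y)+\rho(z,w)=p_x+p_y+p_z+p_w$, whereas $\rho(x,z)+\rho(y,w)=\rho(x,w)+\rho(y,z)=p_x+p_y+p_z+p_w+2\ell$. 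Hence two of the three pairwise sums coincide and exceed the third by $2\ell\ge0$, which is precisely (\ref{eq:4point}); degenerate quartets, where $\ell=0$ or some pendant path vanishes because $\mu$ is not injective, are covered by the same identity.

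\emph{Sufficiency.} For the converse I would induct on $\#X$. When $\#X\le3$ every $\rho\in M(X)$ is realised by a tripod whose three pendant lengths are the Gromov products $\langle x,y\rangle_z:=\tfrac12(\rho(x,z)+\rho(y,z)-\rho(x,y))$; these are nonnegative by the triangle inequality and may vanish, in which case the corresponding leaf is placed at the centre, so $\mu$ becomes non-injective. For the inductive step I would delete a taxon $w$, apply the induction hypothesis to $X\setminus\set w$ (which again satisfies (\ref{eq:4point})) to obtain a tree $\tau_0$ realising $\rho$ there, and then graft $w$ back onto $\tau_0$ by determining its branch-off point and its pendant length so that every distance $\rho(w,\cdot)$ is recovered.

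The consistency of this grafting is the step I expect to be the main obstacle. Fixing the basepoint $w$, a short calculation shows that the instances of (\ref{eq:4point}) involving $w$ are equivalent to the ultrametric-type inequality $\langle x,y\rangle_w\ge\min(\langle x,z\rangle_w,\langle y,z\rangle_w)$ holding for all $x,y,z\in X\setminus\set w$. This is exactly what forces the point at which the path from $w$ meets the $x$--$y$ geodesic of $\tau_0$ to be located consistently as $x,y$ vary, so that $w$ attaches at a single well-defined point of $\tau_0$ with a uniquely determined, nonnegative pendant length. Turning this inequality into an unambiguous attachment---and treating the boundary case in which $w$ lands on an existing vertex, which merely renders $\mu$ non-injective while keeping all edge weights in $\NRpp$---is the technical core. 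Once it is in place, checking that the enlarged tree lies in $T(X)$ and realises $\rho$ on all of $\binom X2$ is routine.
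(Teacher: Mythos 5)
Your route is genuinely different from the paper's. The paper does not reprove Buneman's theorem at all: its proof consists of (i) reducing the semimetric case to the metric case by identifying points $x,y$ with $\rho(x,y)=0$, (ii) citing \cite{Bun71} for the equivalence in the metric case, and (iii) remarking how splits and edge weights of $\tau$ are read off from the instances of strict inequality in (\ref{eq:4point}), which gives uniqueness of $\tau\in T(X)$. You instead attempt a self-contained proof: necessity by the quartet computation (which is complete and correct, including the degenerate cases), and sufficiency by induction on $\#X$ via Gromov products. Your reformulation of the instances of (\ref{eq:4point}) containing $w$ as the ultrametric-type inequality $\langle x,y\rangle_w\ge\min(\langle x,z\rangle_w,\langle y,z\rangle_w)$ is also correct, since the three Gromov products at $w$ are, up to an additive constant, the negatives of the three pairwise sums. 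What the paper's route buys is brevity and the uniqueness statement almost for free; what yours would buy is independence from the literature and an explicit construction of $\tau$.

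However, as written your sufficiency direction has a genuine gap, and it sits exactly where you say it does: the passage from the ultrametric-type inequality to ``$w$ attaches at a single well-defined point of $\tau_0$ with a uniquely determined, nonnegative pendant length'' is asserted, not proved, and it is precisely the mathematical content of Buneman's theorem --- the part the paper outsources to \cite{Bun71}. To close it you would have to actually exhibit the attachment: e.g.\ choose a pair $\set{x_0,y_0}\subseteq X\setminus\set w$ minimising $\langle x,y\rangle_w$, place the attachment point $p$ on the $x_0$--$y_0$ path of $\tau_0$ at distance $\langle y_0,w\rangle_{x_0}$ from $x_0$ (subdividing an edge if necessary), give $w$ pendant length $\langle x_0,y_0\rangle_w\ge0$, and then verify $\rho(w,z)=h+d_{\tau_0}(p,z)$ for \emph{every} $z\in X\setminus\set w$ using the minimality of $\langle x_0,y_0\rangle_w$ together with the ultrametric inequality; finally check that the result is the minimal tree on $\mu(X)$ with weights in $\NRpp$ (suppressing zero-length pendant edges, which is where $\mu$ may become non-injective). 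Until that verification is carried out, your proposal establishes only the easy direction of Lemma \ref{lem:4point}; alternatively, you could do what the paper does and replace the whole inductive step by the reduction to a metric plus the citation.
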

\begin{proof}
  Identifying points $x,y\in X$ with $\rho(x,y)=0$ we can assume that $\rho$ is a metric. That (\ref{eq:4point}) is necessary and sufficient now for the existence of $\tau$ was shown in \cite{Bun71}. The splits of $\tau$ are identified by situations where  in (\ref{eq:4point}) strict inequality holds. Minimality of the vertex set of $\tau$ (according to definition) implies that different edges in $\tau$ induce different splits. The weight of the edge corresponding to a split by   (\ref{eq:4point}) computes directly from the difference of the right and the left hand side in (\ref{eq:4point}). Thus $\tau\in T(X)$ is uniquely determined. 
\end{proof}
Let us compute some example. 
\begin{example}\label{ex:1}
  We want to compare for  $X=\set{A,B,C,D}$ the two unweigthed $X-$trees
  \begin{displaymath}
\tau=\begin{tikzpicture}[xscale=0.5,yscale=0.2, baseline =10 ]
\node (A) at (0,4) [rectangle,rounded corners,draw=black] {$A$};
\node (B) at (0,0) [rectangle,rounded corners,draw=black] {$B$};
\node (C) at (2,2)  {$\bullet$};
\node (D) at (4,2)  {$\bullet$};
\node (E) at (6,0) [rectangle,rounded corners,draw=black] {$C$};
\node (F) at (6,4) [rectangle,rounded corners,draw=black] {$D$};
\draw[very thick] (A) to (C);
\draw[very thick] (B) to (C);
\draw[very thick] (C) to (D);
\draw[very thick] (D) to (E);
\draw[very thick] (D) to (F);
\end{tikzpicture}
\qquad \mbox{and}\qquad \tau'=\begin{tikzpicture}[xscale=0.5,yscale=0.2, baseline =10 ]
\node (A) at (0,4) [rectangle,rounded corners,draw=black] {$A$};
\node (B) at (0,0) [rectangle,rounded corners,draw=black] {$C$};
\node (C) at (2,2)  {$\bullet$};
\node (D) at (4,2)  {$\bullet$};
\node (E) at (6,0) [rectangle,rounded corners,draw=black] {$B$};
\node (F) at (6,4) [rectangle,rounded corners,draw=black] {$D$};
\draw[very thick] (A) to (C);
\draw[very thick] (B) to (C);
\draw[very thick] (C) to (D);
\draw[very thick] (D) to (E);
\draw[very thick] (D) to (F);
\end{tikzpicture}
\end{displaymath}
 with corresponding distances $\rho,\rho'$. 

We want to  derive possible extensions of  $\rho,\rho'$ by  verifying that for some  $\delta_A,\delta_B,\delta_C,\delta_D\ge0$ the graph distances on the weighted graph 
\begin{displaymath}
  G=\begin{tikzpicture}[baseline=25,xscale=0.5,yscale=0.4 ]
\node (A) at (3,6) [rectangle,rounded corners,draw=black] {$A$};
\node (B) at (3,4) [rectangle,rounded corners,draw=black] {$B$};
\node (E) at (0,5)  {$\bullet$};
\node (F) at (0,1)  {$\bullet$};
\node (C) at (3,2) [rectangle,rounded corners,draw=black] {$C$};
\node (D) at (3,0) [rectangle,rounded corners,draw=black] {$D$};
\draw[very thick] (E) to (A);
\draw[very thick] (E) to (B);
\draw[very thick] (F) to (C);
\draw[very thick] (F) to (D);
\draw[very thick] (E) to (F);
\node (As) at (8,6) [rectangle,rounded corners,draw=black] {$A'$};
\node (Cs) at (8,4) [rectangle,rounded corners,draw=black] {$C'$};
\node (Es) at (11,5)  {$\bullet$};
\node (Fs) at (11,1)  {$\bullet$};
\node (Bs) at (8,2) [rectangle,rounded corners,draw=black] {$B'$};
\node (Ds) at (8,0) [rectangle,rounded corners,draw=black] {$D'$};
\draw[very thick] (Es) to (As);
\draw[very thick] (Es) to (Cs);
\draw[very thick] (Fs) to (Bs);
\draw[very thick] (Fs) to (Ds);
\draw[very thick] (Es) to (Fs);
\draw[dotted,thick] (A) to (As);
\draw[dotted,thick] (B) to (Bs);
\draw[dotted,thick] (C) to (Cs);
\draw[dotted,thick] (D) to (Ds);
\node [above] at (5.5,6) {$\delta_A$};
\node [above] at (6.5,3.5) {$\delta_C$};
\node [above] at (4.5,3.5) {$\delta_B$};
\node [above] at (5.5,0) {$\delta_D$};
\end{tikzpicture}
\end{displaymath}
reproduce  both   $\rho$ and $\rho'$. One obvious choice is $\delta_A=0,\delta_B=1,\delta_C=1,\delta_D=0$, i.e. 
\begin{displaymath}
  G=\begin{tikzpicture}[baseline=25,xscale=0.5,yscale=0.4 ]
\node (B) at (3,4) [rectangle,rounded corners,draw=black] {$B$};
\node (E) at (0,5)  {$\bullet$};
\node (F) at (0,1)  {$\bullet$};
\node (C) at (3,2) [rectangle,rounded corners,draw=black] {$C$};
\node (AAs) at (5.5,6) [rectangle,rounded corners,draw=black] {$A=A'$};
\node (Cs) at (8,4) [rectangle,rounded corners,draw=black] {$C'$};
\node (Es) at (11,5)  {$\bullet$};
\node (Fs) at (11,1)  {$\bullet$};
\node (Bs) at (8,2) [rectangle,rounded corners,draw=black] {$B'$};
\node (DDs) at (5.5,0) [rectangle,rounded corners,draw=black] {$D=D'$};
\draw[very thick] (E) to (AAs);
\draw[very thick] (E) to (B);
\draw[very thick] (F) to (C);
\draw[very thick] (F) to (DDs);
\draw[very thick] (E) to (F);
\draw[very thick] (Es) to (AAs);
\draw[very thick] (Es) to (Cs);
\draw[very thick] (Fs) to (Bs);
\draw[very thick] (Fs) to (DDs);
\draw[very thick] (Es) to (Fs);
\draw[dotted,thick] (B) to (Bs);
\draw[dotted,thick] (C) to (Cs);
\node [above] at (6.5,3.5) {$1$};
\node [above] at (4.5,3.5) {$1$};
\end{tikzpicture}
\end{displaymath}
is consistent. Obviously, we embedded now both $\tau$ and $\tau'$ into the metric space of the graph $G$.  We see $D_\infty\le1$, $D_2\le\sqrt2$ and $D_1\le2$. In fact equality holds, but this we can prove only later in Example \ref{ex:2}. 
\end{example}
  Additionally, we obtain

\begin{lemma}\label{lem:algebra}
 For $\lambda\ge0$, $i=1,2,\infty$, and $\rho_j\in M(X)$, $j=1,2,3,4$,  the following are true:
 \begin{enumerate}
 \item $D_i(\lambda\rho_1,\lambda\rho_2)=\lambda D_i(\rho_1,\rho_2)$.
 \item $D_i(\rho_1+\rho_3,\rho_2+\rho_4)\le  D_i(\rho_1,\rho_2)+ D_i(\rho_3,\rho_4)$.
 \item $D_i(\lambda\rho_1+(1-\lambda)\rho_2,\rho_3)\le\lambda D_i(\rho_1,\rho_3)+(1-\lambda)D_i(\rho_2,\rho_3)$.
 \end{enumerate}
\end{lemma}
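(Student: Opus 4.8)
The plan is to work throughout with the extension characterisation from Lemma \ref{lem:DibyextensionXX'}, exploiting that the set $E(\rho,\rho')$ of joint extensions behaves well under scaling and addition of semimetrics, together with the existence of attaining extensions granted by Lemma \ref{lem:di*}.

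For (1) I would observe that for $\lambda>0$ the map $\bar d\mapsto\lambda\bar d$ is a bijection of $M(X\cup X')$ onto itself (a nonnegative multiple of a semimetric is again a semimetric, as the triangle inequality scales) which restricts to a bijection from $E(\rho_1,\rho_2)$ onto $E(\lambda\rho_1,\lambda\rho_2)$. Since $\norm{\lambda v}_i=\lambda\norm{v}_i$, taking the infimum in (\ref{eq:DibyextensionXX'}) over these two identified sets gives the claimed equality. The remaining case $\lambda=0$ reduces to $D_i(0,0)=0$, which is immediate from $D_i$ being a metric.

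For (2) I would pick, via Lemma \ref{lem:di*}, optimal extensions $d_a\in E(\rho_1,\rho_2)$ and $d_b\in E(\rho_3,\rho_4)$ realising $D_i(\rho_1,\rho_2)$ and $D_i(\rho_3,\rho_4)$. The key step is to check that $d_a+d_b$ lies in $E(\rho_1+\rho_3,\rho_2+\rho_4)$: the sum of two semimetrics is a semimetric, and on the unprimed resp.\ primed pairs the defining constraints simply add, so that $(d_a+d_b)(x,y)=\rho_1(x,y)+\rho_3(x,y)$ and $(d_a+d_b)(x',y')=\rho_2(x,y)+\rho_4(x,y)$. Hence $d_a+d_b$ is an admissible competitor, and the triangle inequality for $\norm\cdot_i$ applied to $(d_a(x,x'))_{x\in X}$ and $(d_b(x,x'))_{x\in X}$ yields
\[
  D_i(\rho_1+\rho_3,\rho_2+\rho_4)\le\norm{(d_a(x,x')+d_b(x,x'))_{x\in X}}_i\le\norm{(d_a(x,x'))_{x\in X}}_i+\norm{(d_b(x,x'))_{x\in X}}_i,
\]
which is exactly the required bound.

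Finally (3) is not really independent. Writing $\rho_3=\lambda\rho_3+(1-\lambda)\rho_3$ for $\lambda\in[0,1]$ and applying (2) to the pairs $(\lambda\rho_1,\lambda\rho_3)$ and $((1-\lambda)\rho_2,(1-\lambda)\rho_3)$, then pulling the scalars out by (1), produces the convexity inequality in one line. I do not expect a genuine obstacle anywhere; the only point demanding care is the verification in (2) that $d_a+d_b$ simultaneously satisfies the semimetric axioms and both extension constraints, after which everything reduces to homogeneity and the triangle inequality of the norm. A minor variant would be to avoid Lemma \ref{lem:di*} and argue with infima over arbitrary near-optimal extensions, but using the attained optima keeps the bookkeeping cleanest.
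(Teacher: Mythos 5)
Your proof is correct and takes essentially the same approach as the paper: the scaling bijection $\bar d\mapsto\lambda\bar d$ between $E(\rho_1,\rho_2)$ and $E(\lambda\rho_1,\lambda\rho_2)$ for (1), the observation that $\bar d_1+\bar d_2\in E(\rho_1+\rho_3,\rho_2+\rho_4)$ together with the norm triangle inequality for (2), and deducing (3) from (1) and (2) via $\rho_3=\lambda\rho_3+(1-\lambda)\rho_3$. The only cosmetic difference is that you invoke Lemma \ref{lem:di*} to work with attained optima, whereas the paper argues directly with infima over all extensions; both are fine.
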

\begin{proof}
  The first relation follows from $\lambda \bar d\in E(\lambda\rho_1,\lambda\rho_2)\iff\def\lambda{}\lambda \bar d\in E(\lambda\rho_1,\lambda\rho_2) $.

The second relation follows from $\bar d_1+\bar d_2\in E(\rho_1+\rho_3,\rho_2+\rho_4)$ for all $\bar d_1\in E(\rho_1,\rho_2)  $ and  $\bar d_2\in E(\rho_3,\rho_4)  $.

The third relation is just a consequence of the first two.
\end{proof}
\section{Efficient Computation}
Clearly, 
\begin{lemma}
  $D_1$ and $D_\infty$ can be computed solving a linear program. For the computation of  $D_2$ a quadratic program has to be solved.
\end{lemma}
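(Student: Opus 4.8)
The plan is to read off the result directly from Lemma~\ref{lem:DibyextensionXX'}, which rewrites each $D_i(\rho,\rho')$ as the minimisation of the $\ell^i$-norm of the vector $(\bar d(x,x'))_{x\in X}$ over the extension set $E(\rho,\rho')$. The key observation is that $E(\rho,\rho')$ is a polyhedron cut out by finitely many linear inequalities, so the three problems fit the standard LP and QP templates, and the stated complexities follow by inspecting the objective in each case.

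First I would introduce the free variables. An extension $\bar d\in M(X\cup X')$ is completely determined by its cross values $u_{x,y}:=\bar d(x,y')$ for $x,y\in X$, since the entries $\bar d(x,y)=\rho(x,y)$ and $\bar d(x',y')=\rho'(x,y)$ are fixed by the definition of $E(\rho,\rho')$. This gives $\#X^2$ real variables. Membership $\bar d\in M(X\cup X')$ then reduces to non-negativity $u_{x,y}\ge0$ together with the triangle inequalities over all triples of $X\cup X'$; because the within-$X$ and within-$X'$ entries are now constants, every such inequality is linear in the $u_{x,y}$ (the middle vertex of a triple may be primed or unprimed, giving three inequalities per triple, in total $O(\#X^3)$ of them). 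Hence $E(\rho,\rho')$ is a polyhedron. It need not be bounded, but by Lemma~\ref{lem:di*} the relevant infima are attained, so no boundedness is required for the programs to have optimal solutions.

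Next I would treat the three objectives in turn. For $D_1$ the objective $\sum_{x\in X}u_{x,x}$ is linear in the variables, so minimising it over the polyhedron $E(\rho,\rho')$ is a linear program. For $D_\infty$ I would use the standard epigraph reformulation: adjoin one auxiliary variable $t$, impose the linear constraints $t\ge u_{x,x}$ for every $x\in X$, and minimise $t$; at the optimum $t=\max_{x\in X}u_{x,x}$, so this is again a linear program. For $D_2$ note that $D_2(\rho,\rho')^2=\sum_{x\in X}u_{x,x}^2$ is a convex quadratic function of the variables; minimising it over the same polyhedron is a quadratic program, and $D_2(\rho,\rho')$ is recovered as the square root of the optimal value.

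There is no genuine obstacle here, which is why the statement can be prefaced by ``Clearly''; the only points requiring care are bookkeeping ones. One must confirm that each triangle inequality stays linear after the fixed entries $\rho,\rho'$ are substituted, and one must record the $D_\infty$ epigraph trick so that the pointwise maximum is replaced by linear constraints. Feasibility and attainment of the minima are already supplied by Lemma~\ref{lem:di*}, so nothing further is needed beyond exhibiting the linear constraint system and the respective objectives.
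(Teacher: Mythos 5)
Your proposal is correct and follows exactly the paper's route: the paper's proof is the one-line observation that the claim follows from Lemma~\ref{lem:DibyextensionXX'}, and the surrounding text describes precisely your formulation --- the $n^2$ variables $\bar d(x,y')$ with $\mathrm{O}(n^3)$ linear triangle-inequality constraints, a linear objective for $D_1$, and a convex quadratic one for $D_2$. Your added details (the epigraph reformulation for $D_\infty$ and the appeal to Lemma~\ref{lem:di*} for attainment on the unbounded polyhedron) are exactly the bookkeeping the paper leaves implicit.
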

\begin{proof}
  This follows immediately from Lemma \ref{lem:DibyextensionXX'}.
\end{proof}
So, we are sure that we can compute the distance in a computing time polynomially bounded in $n=\#X$ \cite{Karmakar}. In the naïve way, the linear (quadratic) program has the $n^2$ variables $\epsilon_{xy}=\bar d(x,y')$ and $\mathrm{O}(n^3)$ restrictions coming essentially from the triangle inequalities in triangles of the form $x,y,z'$ or similar. But we can do the computation more efficiently. The essential observation is that the objective function depends on the unknown values $(\bar d(x,x'))_{x\in X}$ only. The reformulation of  the constraints   is provided by the following theorem. It will be  proved later in section \ref{sec:metrext}. 

\begin{theorem}[quadrangle inequalities]\label{th:quadrin}
Let $\rho,\rho'\in M(X)$ and $(\delta_x)_{x\in X}\in \NRp^X$ be given. Then there exists a $\bar d\in M(X\cup X')$ with
\begin{eqnarray*}
  \bar d(x,y)&=&\rho(x,y)\qquad x,y\in X\\
  \bar d(x',y')&=&\rho'(x,y)\qquad x,y\in X\\
  \bar d(x,x')&=&\delta_x\qquad x\in X
\end{eqnarray*}
if and only if  for all $x\ne y\in X$ the following inequalities are fulfilled:
\begin{equation}
  \begin{array}[c]{rcl}
    \delta_x+\delta_y&\ge&\abs{\rho(x,y)-\rho'(x,y)}\\
    \abs{\delta_x-\delta_y}&\le&\rho(x,y)+\rho'(x,y)
  \end{array}\label{eq:quadr}
\end{equation}
\end{theorem}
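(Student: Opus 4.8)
The plan is to prove necessity by a direct appeal to the triangle inequality on the four relevant points, and to prove sufficiency by exhibiting an explicit candidate extension, namely the shortest-path semimetric of the graph obtained by gluing the copy $X$ and the copy $X'$ with an edge of length $\delta_x$ between each $x$ and $x'$.

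For necessity, suppose such a $\bar d\in M(X\cup X')$ exists and fix $x\ne y$. I would look at the quadruple $x,y,x',y'$, on whose four ``sides'' $\bar d$ takes the prescribed values $\rho(x,y),\rho'(x,y),\delta_x,\delta_y$. Running the triangle inequality along the path $x\to x'\to y'\to y$ gives $\rho(x,y)\le\delta_x+\rho'(x,y)+\delta_y$, and the mirror path (swapping the two copies) gives $\rho'(x,y)\le\delta_x+\rho(x,y)+\delta_y$; together these are the first line of (\ref{eq:quadr}). Running it along $x\to y\to y'\to x'$ gives $\delta_x\le\rho(x,y)+\delta_y+\rho'(x,y)$, and its mirror $\delta_y\le\rho(x,y)+\delta_x+\rho'(x,y)$, which together give the second line.

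For sufficiency, assume (\ref{eq:quadr}), keep $\bar d(x,y)=\rho(x,y)$ and $\bar d(x',y')=\rho'(x,y)$, and define the cross distances by $\bar d(x,y')=\bar d(y',x)=\min_{z\in X}\bigl(\rho(x,z)+\delta_z+\rho'(z,y)\bigr)$ for $x,y\in X$. This is nonnegative and symmetric by inspection, so two things remain. First, $\bar d(x,x')=\delta_x$: the choice $z=x$ gives $\le\delta_x$, and the reverse is exactly $\delta_x-\delta_z\le\rho(x,z)+\rho'(x,z)$ for all $z$, i.e.\ the second line of (\ref{eq:quadr}). Second, the triangle inequality. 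Since $\rho,\rho'$ are already semimetrics, only the mixed triples remain, and by the symmetry $X\leftrightarrow X'$, $\rho\leftrightarrow\rho'$ (under which both the construction and (\ref{eq:quadr}) are invariant) it suffices to treat a triple $x,y,w'$ with $x,y\in X$. The inequality $\bar d(x,w')\le\rho(x,y)+\bar d(y,w')$ drops out immediately by substituting $\rho(x,z)\le\rho(x,y)+\rho(y,z)$ into the defining minimum.

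The hard part will be the one remaining mixed inequality $\rho(x,y)\le\bar d(x,w')+\bar d(y,w')$, since here a $\rho$-length on the unprimed side must be controlled by data living on the primed side. I would choose minimizers $a,b$ realizing $\bar d(x,w')$ and $\bar d(y,w')$, bound $\rho(x,y)\le\rho(x,a)+\rho(a,b)+\rho(b,y)$ by the triangle inequality for $\rho$, and then invoke the \emph{first} quadrangle inequality in the form $\rho(a,b)\le\rho'(a,b)+\delta_a+\delta_b$, followed by $\rho'(a,b)\le\rho'(a,w)+\rho'(w,b)$; the terms then reassemble exactly into $\bar d(x,w')+\bar d(y,w')$. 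This is the only genuinely non-routine step: the second inequality of (\ref{eq:quadr}) is used solely to pin the diagonal at $\delta_x$, whereas the first inequality is precisely the device that trades $\rho(a,b)$ for $\rho'(a,b)$ at the cost of $\delta_a+\delta_b$, and I expect the cleanest write-up to record the two mixed cases once and close the second by the stated symmetry rather than recomputing it.
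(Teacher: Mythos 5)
Your proof is correct, and it takes a genuinely different, more self-contained route than the paper. The paper obtains Theorem \ref{th:quadrin} as an immediate corollary of a general graph-theoretic criterion proved in its appendix (Theorem \ref{th:metextmin}): a partial dissimilarity on a connected simple graph extends to a semimetric if and only if every edge of every chordless cycle has length at most half the length of that cycle. Applied to the graph on $X\cup X'$ with edge set $\binom X2\cup\binom{X'}2\cup\set{\set{x,x'}:x\in X}$, the chordless cycles are exactly the triangles inside each clique (whose conditions are the triangle inequalities for $\rho$ and $\rho'$) and the quadrilaterals $x,y,y',x'$ (whose conditions are exactly (\ref{eq:quadr})). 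Your argument instead inlines what that machinery conceals: your candidate $\bar d(x,y')=\min_{z\in X}\bigl(\rho(x,z)+\delta_z+\rho'(z,y)\bigr)$ is precisely the shortest-path semimetric of this glued graph (under (\ref{eq:quadr}) a path crossing several bridges is never shorter than the best single crossing, by the same trading step you use), and your verification of the mixed triangle $\rho(x,y)\le\bar d(x,w')+\bar d(y,w')$ via $\rho(a,b)\le\rho'(a,b)+\delta_a+\delta_b$ is the honest work that the paper delegates to the chord-splitting argument of Theorem \ref{th:metextmin} together with the folklore shortest-path criterion (Theorem \ref{th:metext}), which it cites from the literature rather than proves. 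The trade-off: the paper's route is modular --- the same appendix lemma is reused for Lemma \ref{lem:biextension}, the amalgamation step in the proof that $D_i$ satisfies the triangle inequality --- whereas yours is elementary, fully self-contained, and exhibits the extension explicitly (indeed the pointwise-largest admissible one, since any extension is dominated by the single-crossing bound). Your closing observation that the second line of (\ref{eq:quadr}) enters only to pin $\bar d(x,x')=\delta_x$ also resonates with the paper's relaxed metrics $\tilde D_i$, where exactly those constraints are dropped.
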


Thus we have just $n$ variables  $\delta_{x}=\bar d(x,x')$ and $\mathrm{O}(n^2)$ constraints for each rectangle  $x,y,y',x'$  in the optimisation problems (\ref{eq:DibyextensionXX'}). 
Formally, $D_i(\rho,\rho')$ solves the program
\begin{equation}
  \label{eq:min1}
  \begin{array}[c]{*4c}
    \norm\delta_i&\to&\min\qquad\mbox{under}\\[1ex]
\delta_x&\ge&0&x\in X\\
    \delta_x+\delta_y&\ge&\abs{\rho(x,y)-\rho'(x,y)}&x\ne y\in X\\
    \abs{\delta_x-\delta_y}&\le&\rho(x,y)+\rho'(x,y)&x\ne y\in X\\
  \end{array}
\end{equation}

\begin{example}\label{ex:2}
  Let us continue Example \ref{ex:1}. Since $\rho(A,D)=\rho'(A,D)$,   we see from the upper parts of (\ref{eq:quadr})
  \begin{eqnarray*}
    \delta_A+\delta_B&\ge &1\\
    \delta_A+\delta_C&\ge &1\\
    \delta_B+\delta_D&\ge &1\\
    \delta_C+\delta_D&\ge &1
  \end{eqnarray*}
Consequently,
\begin{displaymath}
  D_1(\rho,\rho')\ge \delta_A+\delta_B+
    \delta_C+\delta_D\ge 2.
\end{displaymath}
We already saw that we can realise this minimum. The calculation of $D_\infty(\rho,\rho')=1$ was already done,  $D_2(\rho,\rho')=\sqrt 2$ is immediate. 
\end{example}

It is very interesting that the upper bounds on the differences are not used in the calculation. In fact, we could not observe any situation where they had to be  used to determine the minimum.  This can be  seen  also from the numerical results in  section \ref{sec:numeric}, especially Figure \ref{fig:gleich}. But, we are still lacking a proof that we may omit these constraints safely.  This leads us to the  definition of further distances $\tilde D_i(\rho,\rho')$ as solution of 
\begin{equation}
  \label{eq:min2}
  \begin{array}[c]{*4c}
    \norm\delta_i&\to&\min\qquad\qquad\mbox{\textrm{under}}\\
\delta_x&\ge&0&x\in X\\
    \delta_x+\delta_y&\ge&\abs{\rho(x,y)-\rho'(x,y)}&x\ne y\in X
  \end{array}
\end{equation}
with the obvious extension to tree space. 
\begin{lemma}\label{lem:tildemetrics}
  $\tilde D_i$ are metrics on $M(X)$ and $T(X)$, too. 
\end{lemma}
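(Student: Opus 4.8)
The plan is to verify the metric axioms directly on the program (\ref{eq:min2}), exploiting two structural features: its feasible region depends on $\rho,\rho'$ only through the symmetric quantities $\abs{\rho(x,y)-\rho'(x,y)}$, and feasible points for two consecutive pairs simply add up. Symmetry is then immediate, since swapping $\rho$ and $\rho'$ leaves every constraint $\delta_x+\delta_y\ge\abs{\rho(x,y)-\rho'(x,y)}$ unchanged, so the two programs coincide and $\tilde D_i(\rho,\rho')=\tilde D_i(\rho',\rho)$.

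For the identity of indiscernibles I would argue as follows. If $\rho=\rho'$, then $\delta=0$ is feasible and clearly optimal, giving $\tilde D_i(\rho,\rho')=0$. Conversely, if $\rho\ne\rho'$, pick a pair $x\ne y$ with $c:=\abs{\rho(x,y)-\rho'(x,y)}>0$; every feasible $\delta$ then satisfies $\delta_x+\delta_y\ge c$, whence $\norm{\delta}_i\ge\norm{\delta}_\infty\ge\max(\delta_x,\delta_y)\ge c/2>0$, so $\tilde D_i(\rho,\rho')\ge c/2>0$. This explicit lower bound lets me avoid any appeal to attainment; finiteness is equally easy, e.g. $\delta_x=\max_z\abs{\rho(x,z)-\rho'(x,z)}$ is feasible, and attainment of the minimum itself, if wanted, follows exactly as in Lemma \ref{lem:di*} since the sublevel sets of $\norm{\cdot}_i$ meet the closed feasible region in compact sets.

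The triangle inequality is the substantive step, and it is where additivity of feasible points pays off. Given $\rho,\rho',\rho''\in M(X)$, let $\delta^{(1)}$ be feasible for $(\rho,\rho')$ and $\delta^{(2)}$ feasible for $(\rho',\rho'')$. Then $\delta:=\delta^{(1)}+\delta^{(2)}\ge0$ and, for all $x\ne y$,
\begin{align*}
  \delta_x+\delta_y &=(\delta^{(1)}_x+\delta^{(1)}_y)+(\delta^{(2)}_x+\delta^{(2)}_y)\\
  &\ge\abs{\rho(x,y)-\rho'(x,y)}+\abs{\rho'(x,y)-\rho''(x,y)}\\
  &\ge\abs{\rho(x,y)-\rho''(x,y)},
\end{align*}
using the ordinary triangle inequality for $\abs{\cdot}$. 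Hence $\delta$ is feasible for $(\rho,\rho'')$, and the triangle inequality for $\norm{\cdot}_i$ gives $\tilde D_i(\rho,\rho'')\le\norm{\delta}_i\le\norm{\delta^{(1)}}_i+\norm{\delta^{(2)}}_i$. Taking the infimum over $\delta^{(1)}$ and $\delta^{(2)}$ yields $\tilde D_i(\rho,\rho'')\le\tilde D_i(\rho,\rho')+\tilde D_i(\rho',\rho'')$.

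Finally, to transfer everything to tree space I would invoke Lemma \ref{lem:4point}: the map $\tau\mapsto d_\tau|_{\binom X2}$ sends $T(X)$ into $M(X)$, and since the four point condition recovers $\tau$ uniquely from its induced metric, this map is injective. Therefore $\tilde D_i$ pulls back to a metric on $T(X)$; in particular $\tilde D_i(\tau,\tau')=0$ forces $d_\tau|_{\binom X2}=d_{\tau'}|_{\binom X2}$ and hence $\tau=\tau'$. I expect no genuine obstacle in this lemma—the whole argument is a routine translation of the proof of Theorem \ref{th:metrics} to the relaxed constraint set, the only delicate point being the zero-distance direction, which here is settled cleanly by the $c/2$ bound rather than by an attainment argument.
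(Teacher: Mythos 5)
Your proof is correct and follows essentially the same route as the paper: symmetry read off the constraint set, and the triangle inequality via additivity of feasible vectors $\delta^{(1)}+\delta^{(2)}$ for the relaxed program (\ref{eq:min2}). The only minor deviations are that your explicit bound $\tilde D_i(\rho,\rho')\ge c/2$ in the definiteness step replaces the paper's appeal to attainment of the minimum (a slightly cleaner variant), and that you make explicit the passage to $T(X)$ via injectivity of $\tau\mapsto d_\tau|_{\binom X2}$ from Lemma \ref{lem:4point}, which the paper leaves implicit.
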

\begin{proof}
  Observe that exacly like for the problem (\ref{eq:min1}), also the minimum of (\ref{eq:min2}) is attained. 

Symmetry of the definition is clear. Further, $\tilde D_i(\rho,\rho')=0$ if and only if $\delta=0$ is feasible for the problem  (\ref{eq:min1}). That means $\rho(x,y)=\rho'(x,y)$ for all $\set{x,y}\in \binom X2$ and $\rho=\rho'$. 
 
For the proof of the triangle inequality choose  optimal solutions $\delta^1\in \NRp^X$ of (\ref{eq:min2}) and   $\delta^2\in \NRp^X$ of the version of (\ref{eq:min2}) for $\rho',\rho''$. 
We see for $\set{x,y}\in\binom X2$ that 
\begin{displaymath}
  \delta^1_x+\delta^2_x+\delta^1_y+\delta^2_y\ge\abs{\rho(x,y)-\rho'(x,y)}+\abs{\rho'(x,y)-\rho''(x,y)}\ge \abs{\rho(x,y)-\rho''(x,y)}
\end{displaymath}
such $\delta^1+\delta^2$ is feasible for the version of (\ref{eq:min2}) for $\rho,\rho''$. We obtain
\begin{displaymath}
  \tilde D_i(\rho,\rho'')\le \norm{\delta^1+\delta^2}_i\le \norm{\delta^1}_i+\norm{\delta^2}_i= \tilde D_i(\rho,\rho') +\tilde D_i(\rho',\rho'').
\end{displaymath}
This completes the proof.
\end{proof}
\begin{remark}
  Interestingly, there is a striking similarity between the feasible set of (\ref{eq:min2}) and the  tight span of a distance matrix introduced in \cite{Dre84}. Yet, $\abs{\rho-\rho'}$ is not a semimetric in general and we do not see a deeper connection at the moment.  
\end{remark}

\section{Comparison to other metrics}
First we compare our metrics to the pathwise difference metrics. Recall that those are defined by \cite{WC71,PH85}
\begin{displaymath}
  D^{PD}_i(\tau_1,\tau_2)=\norm{(\rho_{\tau_1}(x,y)-\rho_{\tau_2}(x,y))_{\set{x,y}\in\binom X2}}_i
\end{displaymath}

Interestingly, it  seems that  $D^{PD}_\infty$ was  not  used before. May be, we can immediately explain this.   Again we abbreviate $n=\#X$. 
\begin{theorem}\label{th:PDcomp}
For $\tau_1,\tau_2\in T(X)$ it holds
\begin{displaymath}
    \begin{array}[c]{*9c}
 D_1(\tau_1,\tau_2)  &\ge&D_2(\tau_1,\tau_2)&\ge& D_\infty(\tau_1,\tau_2)&\ge&\frac1{\sqrt n} D_2(\tau_1,\tau_2)&\ge&\frac1{n} D_1(\tau_1,\tau_2)\\
 \tilde D_1(\tau_1,\tau_2)  &\ge&\tilde D_2(\tau_1,\tau_2)&\ge& \tilde D_\infty(\tau_1,\tau_2)&\ge&\frac1{\sqrt n}\tilde  D_2(\tau_1,\tau_2)&\ge&\frac1{n} \tilde D_1(\tau_1,\tau_2)\\
   \end{array}
\end{displaymath}

  \begin{displaymath}
  \begin{array}[c]{*7c}
  \frac n2 D^{PD}_1(\tau_1,\tau_2)  &\ge&D_1(\tau_1,\tau_2)&\ge&\tilde D_1(\tau_1,\tau_2)&\ge&\frac1{n-1} D^{PD}_1(\tau_1,\tau_2)\\
  \frac{\sqrt n}2 D^{PD}_2(\tau_1,\tau_2)  &\ge&D_2(\tau_1,\tau_2)&\ge&\tilde D_2(\tau_1,\tau_2)&\ge&\sqrt{\frac2{n-1}} D^{PD}_2(\tau_1,\tau_2)\\
&&D_\infty (\tau_1,\tau_2)&=&\tilde D_\infty(\tau_1,\tau_2)&=&\frac12 D^{PD}_\infty(\tau_1,\tau_2)
  \end{array}
\end{displaymath}

\end{theorem}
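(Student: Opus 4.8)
The plan is to run everything through the linear descriptions (\ref{eq:min1}) and (\ref{eq:min2}). Fix $\rho,\rho'$ and abbreviate $a_{xy}=\abs{\rho(x,y)-\rho'(x,y)}$, so that $D^{PD}_i=\norm{(a_{xy})_{\set{x,y}\in\binom X2}}_i$ for all three $i$ (each norm sees only the absolute values). The decisive structural remark is that $D_1,D_2,D_\infty$ are the minima of the three norms $\norm\cdot_1,\norm\cdot_2,\norm\cdot_\infty$ over \emph{one and the same} feasible polytope $P\subseteq\NRp^X$ cut out by (\ref{eq:quadr}), and likewise $\tilde D_1,\tilde D_2,\tilde D_\infty$ over the larger polytope $\tilde P$ of (\ref{eq:min2}), with $P\subseteq\tilde P$. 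From this I would extract a single principle: whenever $\norm v_a\ge c\,\norm v_b$ holds for every $v\in\mathbb R^X$, then $D_a\ge c\,D_b$ and $\tilde D_a\ge c\,\tilde D_b$; indeed the $a$-minimiser $\delta^{(a)}$ satisfies $D_a=\norm{\delta^{(a)}}_a\ge c\norm{\delta^{(a)}}_b\ge c\,D_b$, the last step by feasibility of $\delta^{(a)}$ for the $b$-problem. Both rows of the first block are then nothing but the pointwise estimates $\norm v_1\ge\norm v_2\ge\norm v_\infty$, $\norm v_\infty\ge\frac1{\sqrt n}\norm v_2$ and $\norm v_2\ge\frac1{\sqrt n}\norm v_1$ on $\mathbb R^X$, fed into this principle.

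Next I would settle the exact identity $D_\infty=\tilde D_\infty=\frac12 D^{PD}_\infty$ by exhibiting the constant vector $\delta_x\equiv\frac12\max_{u\neq v}a_{uv}$. It lies in $P$: the lower inequalities of (\ref{eq:quadr}) become $\max_{u\neq v}a_{uv}\ge a_{xy}$, and the upper ones hold because $\delta_x-\delta_y=0$. This gives ``$\le$'', and for ``$\ge$'' any feasible $\delta$ evaluated on a pair $(x_0,y_0)$ realising the maximum obeys $2\norm\delta_\infty\ge\delta_{x_0}+\delta_{y_0}\ge a_{x_0y_0}=D^{PD}_\infty$. The upper halves of the second block now drop out by composition: the first block yields $D_1\le nD_\infty$ and $D_2\le\sqrt nD_\infty$, and combining with the $\infty$-identity and the pointwise chain $D^{PD}_\infty\le D^{PD}_2\le D^{PD}_1$ gives $D_1\le\frac n2D^{PD}_1$ and $D_2\le\frac{\sqrt n}2D^{PD}_2$. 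The central inequalities $D_i\ge\tilde D_i$ are immediate from $P\subseteq\tilde P$.

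The lower halves of the second block I would obtain by summing constraints over $\binom X2$. Each $\delta_x$ appears in exactly $n-1$ of the inequalities $\delta_x+\delta_y\ge a_{xy}$, so summing them gives $(n-1)\norm\delta_1\ge\sum_{\set{x,y}}a_{xy}=D^{PD}_1$ for every $\delta\in\tilde P$, whence $\tilde D_1\ge\frac1{n-1}D^{PD}_1$. For $i=2$ I would square first and then sum, starting from $(D^{PD}_2)^2\le\sum_{x<y}(\delta_x+\delta_y)^2$ and using the algebraic identity $\sum_{x<y}(\delta_x+\delta_y)^2=(n-2)\norm\delta_2^2+\norm\delta_1^2$ together with Cauchy--Schwarz $\norm\delta_1^2\le n\norm\delta_2^2$; this bounds the right-hand side by $2(n-1)\norm\delta_2^2$ and produces a lower bound $\tilde D_2\ge\frac1{\sqrt{2(n-1)}}D^{PD}_2$.

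The \textbf{main obstacle} is exactly this $\ell^2$ lower bound: it is the only place requiring both the combinatorial identity for $\sum_{x<y}(\delta_x+\delta_y)^2$ and a norm-equivalence step, and determining the \emph{sharp} constant is delicate — testing the extremal configuration $\delta\equiv\text{const}$, $a_{xy}\equiv\text{const}$ shows that the value $\frac1{\sqrt{2(n-1)}}$ is attained, so some care is needed to reconcile the constant emerging from this argument with the one asserted in the statement. Everything else reduces to pointwise norm comparisons and the inclusion $P\subseteq\tilde P$, which are routine.
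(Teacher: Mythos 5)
Your proof follows essentially the same route as the paper's: the first block is obtained by evaluating the standard norm inequalities on $\mathbb{R}^X$ at the respective minimisers over the common feasible set, the $\ell^\infty$ identity comes from the constant feasible vector $\delta\equiv\frac12 D^{PD}_\infty$ together with the two-point lower bound $2\norm\delta_\infty\ge\delta_x+\delta_y$, the upper estimates of the second block are composed out of the first block and $D^{PD}_\infty\le D^{PD}_2\le D^{PD}_1$, and the lower estimates come from summing the constraints $\delta_x+\delta_y\ge\abs{\rho(x,y)-\rho'(x,y)}$ over all pairs. The only cosmetic difference is in the $\ell^2$ lower bound: the paper sums the pointwise inequality $\delta_x^2+\delta_y^2\ge\frac12(\delta_x+\delta_y)^2\ge\frac12\abs{\rho(x,y)-\rho'(x,y)}^2$, whereas you expand $\sum_{x<y}(\delta_x+\delta_y)^2$ exactly and then apply Cauchy--Schwarz; both yield the identical bound $(n-1)\norm\delta_2^2\ge\frac12\bigl(D^{PD}_2\bigr)^2$.

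Concerning the ``main obstacle'' you flag: your suspicion is justified, and you should not try to reconcile your constant with the printed one. The paper's own proof delivers only $\tilde D_2\ge\frac1{\sqrt{2(n-1)}}D^{PD}_2$, and the constant $\sqrt{\frac2{n-1}}$ appearing in the theorem, which is larger by a factor of $2$, is in fact false. The paper's own quartet example (Examples \ref{ex:1} and \ref{ex:2}) is a counterexample: there $n=4$, $D^{PD}_2=2$ and $D_2=\sqrt2$, so $\tilde D_2\le D_2=\sqrt 2$ (indeed $\tilde D_2\le 1$, since $\delta\equiv\frac12$ is feasible for (\ref{eq:min2})), while $\sqrt{\frac2{n-1}}\,D^{PD}_2=2\sqrt{2/3}\approx 1.63$. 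Your sharpness test with constant $\delta$ and constant differences shows the same phenomenon at the level of the feasible polytope. So the correct form of that entry is $\tilde D_2\ge\frac1{\sqrt{2(n-1)}}D^{PD}_2$, and your argument, exactly like the paper's, proves that corrected statement; everything else in your proposal is sound.
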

\begin{proof}
The first relations are well-known for $\norm\cdot_i$ and translate directly. 

  For the second relation  we use the first inequality in (\ref{eq:quadr}). This gives us for all $x\ne y\in X$
\begin{eqnarray*}
   \delta_x+\delta_y&\ge&\abs{\rho(x,y)-\rho'(x,y)}\\
   \delta^2_x+\delta^2_y&\ge&\frac12( \delta_x+\delta_y)^2\ge\frac12\abs{\rho(x,y)-\rho'(x,y)}^2\\
\max\set{\delta_x:x\in X}&\ge&  \frac12( \delta_x+\delta_y)\ge\frac12\abs{\rho(x,y)-\rho'(x,y)}
  \end{eqnarray*}
Summing up the first or the second inequalities for all $\set{x,y}\in \binom X2$ gives the estimates for $i=1,2$. 

The $\ge$-estimate for $i=\infty$ follows by taking the maximum of the third inequality over all $\set{x,y}\in \binom X2$. On the other hand, setting
\begin{displaymath}
\delta_z=\max\set{\abs{\rho_{\tau_1}(x,y)-\rho_{\tau_2}(x,y)}:\set{x,y}\in\binom X2}
\end{displaymath}
 $z\in X$, (\ref{eq:quadr}) is clearly fulfilled and we obtain also the $\le$-estimate. 

The first  estimates yield the rest of the second  estimates and complete the proof. 
\end{proof}

By the same arguments as in Lemma \ref{lem:di*}, both (\ref{eq:min1}) and (\ref{eq:min2}) possess minimal points  $\delta^*\in \NRp^X$.  As a corollary of the last theorem we find a useful upper bound for the elements of these vectors:
\begin{lemma}\label{lem:upperbound}
  In the minimisation problems (\ref{eq:min1}) or (\ref{eq:min2}), we may restrict minimisation to $\delta\in \NRp^X$ which fulfil additionally
  \begin{displaymath}
    \delta_x\le 2 D_\infty(\rho,\rho')=D^{PD}_\infty(\rho,\rho').
  \end{displaymath}
E.g., the minimisation problems 
\begin{equation}
  \label{eq:min1b}
  \begin{array}[c]{*4c}
    \norm\delta_i&\to&\min\qquad\mbox{\textrm{under}}\\[1ex]
0\le \delta_x&\le&2D_\infty(\rho,\rho')&x\in X\\
    \delta_x+\delta_y&\ge&\abs{\rho(x,y)-\rho'(x,y)}&x\ne y\in X\\
    \abs{\delta_x-\delta_y}&\le&\rho(x,y)+\rho'(x,y)&x\ne y\in X\\
  \end{array}
\end{equation}
and 
\begin{equation}
  \label{eq:min2b}
  \begin{array}[c]{*4c}
    \norm\delta_i&\to&\min\qquad\mbox{\textrm{under}}\\[1ex]
0\le \delta_x&\le&2D_\infty(\rho,\rho')&x\in X\\
    \delta_x+\delta_y&\ge&\abs{\rho(x,y)-\rho'(x,y)}&x\ne y\in X
  \end{array}
\end{equation}
yield again $D_i(\rho,\rho')$ and $\tilde D_i(\rho,\rho')$ as minimal values, respectively.
\end{lemma}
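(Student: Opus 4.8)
The plan is to show that truncating any feasible vector at the level $M:=2D_\infty(\rho,\rho')$ neither destroys feasibility nor increases the objective; the claim then follows because the infima over the full and the truncated feasible sets coincide. First I would record, using the last line of Theorem \ref{th:PDcomp} together with the definition of $D^{PD}_\infty$, the identity
\begin{displaymath}
  M = 2D_\infty(\rho,\rho') = D^{PD}_\infty(\rho,\rho') = \max\set{\abs{\rho(x,y)-\rho'(x,y)}:\set{x,y}\in\binom X2}.
\end{displaymath}
This is the crux of the whole argument: every right-hand side $\abs{\rho(x,y)-\rho'(x,y)}$ appearing in the lower constraints of (\ref{eq:min1}) and (\ref{eq:min2}) is bounded by $M$.

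Next, given any $\delta\in\NRp^X$ feasible for (\ref{eq:min1}) (respectively (\ref{eq:min2})), I would define its truncation $\hat\delta_x:=\min(\delta_x,M)$. Since $0\le\hat\delta_x\le\delta_x$ componentwise and each $\norm\cdot_i$ is monotone on $\NRp^X$, we obtain $\norm{\hat\delta}_i\le\norm\delta_i$, so truncation can only help the objective. It then remains to check that $\hat\delta$ stays feasible. Nonnegativity and the new bound $\hat\delta_x\le M$ hold by construction. For the lower constraints, fix $x\ne y$: if $\delta_x\ge M$ or $\delta_y\ge M$ then $\hat\delta_x+\hat\delta_y\ge M\ge\abs{\rho(x,y)-\rho'(x,y)}$, whereas if both are $<M$ no truncation occurs and the original feasibility of $\delta$ applies unchanged.

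The one remaining point is the upper (difference) constraint $\abs{\delta_x-\delta_y}\le\rho(x,y)+\rho'(x,y)$ present in (\ref{eq:min1}), and this is the step that, a priori, looks like it could require a clumsy case split and is where I would expect the main difficulty. The clean resolution is the observation that the map $t\mapsto\min(t,M)$ is nonexpansive on $\NRp$, so
\begin{displaymath}
  \abs{\hat\delta_x-\hat\delta_y}=\abs{\min(\delta_x,M)-\min(\delta_y,M)}\le\abs{\delta_x-\delta_y}\le\rho(x,y)+\rho'(x,y),
\end{displaymath}
and the constraint survives truncation automatically, with no case analysis at all.

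Putting these together, $\hat\delta$ is feasible for (\ref{eq:min1b}) (respectively (\ref{eq:min2b})) and satisfies $\norm{\hat\delta}_i\le\norm\delta_i$. Taking the infimum over all feasible $\delta$ therefore shows that (\ref{eq:min1b}) and (\ref{eq:min2b}) attain the same optimal values as (\ref{eq:min1}) and (\ref{eq:min2}), namely $D_i(\rho,\rho')$ and $\tilde D_i(\rho,\rho')$; attainment of the minimum in the bounded problems follows as in Lemma \ref{lem:di*} since the feasible set is now compact. I expect the lower-constraint bookkeeping to be the only place where the identity $M=D^{PD}_\infty$ is genuinely exploited, and the nonexpansiveness of the truncation to be the single observation that turns the potential obstacle into a one-line estimate.
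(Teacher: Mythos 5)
Your proposal is correct and follows essentially the same route as the paper: truncate any feasible $\delta$ at the level $2D_\infty(\rho,\rho')=D^{PD}_\infty(\rho,\rho')$ and use monotonicity of $\norm\cdot_i$. The paper's proof merely asserts that the truncated vector stays feasible, whereas you spell out the verification (the case split for the sum constraints and the nonexpansiveness of $t\mapsto\min(t,M)$ for the difference constraints), which is a welcome but not substantively different elaboration.
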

\begin{proof}
  Define $\tilde \delta$ by $\tilde \delta_x=\min(\delta_x, 2 D_\infty(\rho,\rho'))$. By the above relation, $\tilde \delta$ is again in the feasible set of (\ref{eq:min1}) and (\ref{eq:min2}) respectively. Further,  $\norm{\tilde\delta}_i\le \norm\delta_i$ completes the proof. 
\end{proof}

\begin{lemma}\label{lem:complete}
  $M(X)$ and $T(X)$ are complete in each $D_i$, $i=1,2,\infty$.
\end{lemma}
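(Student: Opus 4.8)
The plan is to reduce completeness in the metrics $D_i$ and $\tilde D_i$ to completeness in the path difference metrics $D^{PD}_i$, which is elementary, and then to transfer it using the bi-Lipschitz bounds already established in Theorem \ref{th:PDcomp}.

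First I would observe that $M(X)$ is a closed subset of the finite-dimensional space $\mathbb{R}^{\binom X2}$. Indeed, a point $\rho\in\mathbb{R}^{\binom X2}$ lies in $M(X)$ exactly when it satisfies the finitely many non-strict linear inequalities $\rho(x,y)\ge0$ and $\rho(x,y)\le\rho(x,z)+\rho(z,y)$, so $M(X)$ is a closed (polyhedral) cone. Since $D^{PD}_i(\rho,\rho')=\norm{(\rho(x,y)-\rho'(x,y))_{\set{x,y}\in\binom X2}}_i$ is precisely the metric induced on $M(X)$ by the $\ell^i$-norm on $\mathbb{R}^{\binom X2}$, and $(\mathbb{R}^{\binom X2},\norm\cdot_i)$ is complete, the closed subset $M(X)$ is complete in $D^{PD}_i$.

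Next I would invoke Theorem \ref{th:PDcomp}. For fixed $n=\#X$ the bounds there read, for each $i$, $c_i\,D^{PD}_i\le D_i\le C_i\,D^{PD}_i$ (and likewise for $\tilde D_i$) with constants $c_i,C_i>0$ depending only on $n$; for $i=\infty$ one even has exact proportionality $D_\infty=\tilde D_\infty=\frac12 D^{PD}_\infty$. Consequently a sequence is $D_i$-Cauchy if and only if it is $D^{PD}_i$-Cauchy, and $D_i$-convergence coincides with $D^{PD}_i$-convergence; hence completeness transfers verbatim from $D^{PD}_i$ to $D_i$ and to $\tilde D_i$. This settles $M(X)$.

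Finally, for the tree space $T(X)$ I would use the four point characterisation of Lemma \ref{lem:4point}. The map $\tau\mapsto d_\tau|_{\binom X2}$ is, by the uniqueness part of that lemma, a bijection of $T(X)$ onto the set of all $\rho\in M(X)$ satisfying (\ref{eq:4point}), and it is an isometry for each $D_i$ by definition. The four point condition (\ref{eq:4point}) is a closed condition, being a finite conjunction of inequalities $\rho(x,y)+\rho(z,w)\le\max(\rho(x,z)+\rho(y,w),\rho(x,w)+\rho(y,z))$ with continuous right hand side; thus the tree metrics form a closed subset of the complete space $(M(X),D_i)$, and a closed subset of a complete space is complete. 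Transporting back along the isometry shows $T(X)$ is complete in every $D_i$ and $\tilde D_i$. The only point that needs a little care is this last one: one must check that a limit of tree metrics does not degenerate outside $T(X)$. This is exactly what closedness of (\ref{eq:4point}) guarantees, the generalised phylogenetic setting (non-injective $\mu$, splits allowed to vanish as edge weights tend to $0$) ensuring that every limiting four point metric still corresponds to some $\tau\in T(X)$.
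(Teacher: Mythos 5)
Your proposal is correct and follows essentially the same route as the paper: completeness of $M(X)$ under the path difference metric, transfer to $D_i$ (and $\tilde D_i$) via the equivalence of metrics in Theorem \ref{th:PDcomp}, and then closedness of the four point condition (\ref{eq:4point}) together with Lemma \ref{lem:4point} to handle $T(X)$. Your version merely fills in details the paper leaves implicit (closedness of $M(X)$ in $\mathbb{R}^{\binom X2}$, and the fact that $\tau\mapsto d_\tau|_{\binom X2}$ is an isometric bijection onto the set of four point semimetrics), and both arguments share the same minor leap of applying Theorem \ref{th:PDcomp}, stated there for trees, to all of $M(X)$ — which is harmless since its proof only uses the quadrangle inequalities of Theorem \ref{th:quadrin}, valid for arbitrary semimetrics.
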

\begin{proof}
  Clearly, $M(X)$ is complete w.r.t. $D_\infty^{PD}$. Since all metrics on $M(X)$ are equivalent by Theorem \ref{th:PDcomp},  the same should be true for $D_i$. On $T(X)$ we have to observe additionally, that $T(X)$ is closed since both sides of the four point conditions (\ref{eq:4point}) depend  continuously  on   the metric. Then Lemma \ref{lem:4point} implies completeness of $T(X)$. 
\end{proof}
To show that the new metrics are biologically meaningful, we show that they don't change much under an NNI (nearest neighbour interchange) operations. Such an operation is given by 
\begin{displaymath}
  \begin{tikzpicture}[xscale=0.5,yscale=0.2, baseline =10 ]
\node (A) at (0,4) [rectangle,draw=black] {$A$};
\node (B) at (0,0) [rectangle,draw=black] {$B$};
\node (C) at (2,2)  {$\bullet$};
\node (D) at (4,2)  {$\bullet$};
\node (E) at (6,0) [rectangle,draw=black] {$C$};
\node (F) at (6,4) [rectangle,draw=black] {$D$};
\draw[very thick] (A) to (C);
\draw[very thick] (B) to (C);
\draw[very thick] (C) to (D);
\draw[very thick] (D) to (E);
\draw[very thick] (D) to (F);
\end{tikzpicture}\qquad\longmapsto\qquad \begin{tikzpicture}[xscale=0.5,yscale=0.2, baseline =10 ]
\node (A) at (0,4) [rectangle,draw=black] {$A$};
\node (B) at (0,0) [rectangle,draw=black] {$C$};
\node (C) at (2,2)  {$\bullet$};
\node (D) at (4,2)  {$\bullet$};
\node (E) at (6,0) [rectangle,draw=black] {$B$};
\node (F) at (6,4) [rectangle,draw=black] {$D$};
\draw[very thick] (A) to (C);
\draw[very thick] (B) to (C);
\draw[very thick] (C) to (D);
\draw[very thick] (D) to (E);
\draw[very thick] (D) to (F);
\end{tikzpicture}
\end{displaymath}
or 
\begin{displaymath}
  \begin{tikzpicture}[xscale=0.5,yscale=0.2, baseline =10 ]
\node (A) at (0,4) [rectangle,draw=black] {$A$};
\node (B) at (0,0) [rectangle,draw=black] {$B$};
\node (C) at (2,2)  {$\bullet$};
\node (D) at (4,2)  {$\bullet$};
\node (E) at (6,0) [rectangle,draw=black] {$C$};
\node (F) at (6,4) [rectangle,draw=black] {$D$};
\draw[very thick] (A) to (C);
\draw[very thick] (B) to (C);
\draw[very thick] (C) to (D);
\draw[very thick] (D) to (E);
\draw[very thick] (D) to (F);
\end{tikzpicture}\qquad\longmapsto\qquad \begin{tikzpicture}[xscale=0.5,yscale=0.2, baseline =10 ]
\node (A) at (0,4) [rectangle, draw=black] {$A$};
\node (B) at (0,0) [rectangle,draw=black] {$D$};
\node (C) at (2,2)  {$\bullet$};
\node (D) at (4,2)  {$\bullet$};
\node (E) at (6,0) [rectangle,draw=black] {$C$};
\node (F) at (6,4) [rectangle,draw=black] {$B$};
\draw[very thick] (A) to (C);
\draw[very thick] (B) to (C);
\draw[very thick] (C) to (D);
\draw[very thick] (D) to (E);
\draw[very thick] (D) to (F);
\end{tikzpicture}
\end{displaymath}
where $A,B,C,D$ denote different subtrees. 
The minimal number of NNI operations to reach $\tau'\in T_1^2(X)$ from $\tau\in T_1^2(X)$ is the NNI-distance $D^{NNI}(\tau,\tau')$ \cite{Rob71}. 
\begin{theorem}
  Consider $\tau,\tau'\in T^2_1(X)$  which are away by one NNI operation.  Then
      \begin{displaymath}
  \begin{array}[c]{*5c}
 D_1(\tau,\tau')&\le& n\\
D_2(\tau,\tau')&\le&\sqrt n\\
D_\infty (\tau,\tau')&=&1
  \end{array}
\end{displaymath}
Especially,
\begin{displaymath}
  D^{NNI}(\tau,\tau')\ge D_\infty (\tau,\tau')\ge \frac1{\sqrt n}D_2(\tau,\tau')\ge \frac1n D_1(\tau,\tau').
\end{displaymath}

\end{theorem}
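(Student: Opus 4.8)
The plan is to reduce the statement to two ingredients: a direct computation of how the induced leaf metric changes under a single NNI, and the already-proved comparison between the $D_i$ and the path-difference metrics in Theorem~\ref{th:PDcomp}. First I would fix the internal edge $e=\set{u,v}$ of $\tau$ that the NNI alters and write the four subtrees meeting $e$ as $A,B$ (hanging at $u$) and $C,D$ (hanging at $v$), the move swapping $B$ and $C$. For a leaf $x$ let $r(x)$ be its distance to the endpoint of $e$ separating it from the interior of $e$; since the NNI changes neither the internal structure of a subtree nor the unit length of the edge joining it to $\set{u,v}$, the quantity $r(x)$ is the same in $\tau$ and $\tau'$. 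A short case analysis on the subtrees containing $x,y$ then shows $\rho_\tau(x,y)=\rho_{\tau'}(x,y)$ whenever $\set{x,y}$ lies inside one subtree or in the pairs $A$--$D$ or $B$--$C$, while for $\set{x,y}$ in $A$--$B$, $A$--$C$, $B$--$D$ or $C$--$D$ the two values differ by exactly the one central edge, i.e.\ by $1$. Hence $\abs{\rho_\tau(x,y)-\rho_{\tau'}(x,y)}\in\set{0,1}$ for every pair and the maximum $1$ is attained, so $D^{PD}_\infty(\tau,\tau')=1$.

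Next I would read off the upper bounds from a single explicit feasible vector. Taking $\delta_x=1$ for all $x\in X$, the first family of inequalities in (\ref{eq:quadr}) holds because $\delta_x+\delta_y=2\ge\abs{\rho_\tau(x,y)-\rho_{\tau'}(x,y)}$, and the second holds trivially since $\abs{\delta_x-\delta_y}=0$. Thus $\delta\equiv1$ is feasible for (\ref{eq:min1}), and evaluating the three norms gives $D_1\le\norm{\mathbf 1}_1=n$, $D_2\le\norm{\mathbf 1}_2=\sqrt n$ and $D_\infty\le1$. The exact value of $D_\infty$ is then supplied by the last line of Theorem~\ref{th:PDcomp}, namely $D_\infty(\tau,\tau')=\tfrac12 D^{PD}_\infty(\tau,\tau')$, combined with $D^{PD}_\infty=1$ from the previous paragraph. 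This is the step where the sharp constant (rather than the crude bound from $\delta\equiv1$) enters, and I would be careful here: the identity actually yields $D_\infty=\tfrac12$, so the bound $\le1$ from $\delta\equiv1$ is not tight and the equality $D_\infty=1$ should be reexamined. Either way, the upper bounds $D_1\le n$, $D_2\le\sqrt n$ and $D_\infty\le1$ are exactly what the displayed chain needs.

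Finally the chain assembles from pieces already in hand: $D_\infty\ge\tfrac1{\sqrt n}D_2\ge\tfrac1n D_1$ is precisely the first line of Theorem~\ref{th:PDcomp}, and $D^{NNI}(\tau,\tau')=1$ because $\tau,\tau'$ are one NNI apart, whence $D^{NNI}\ge D_\infty$ follows from $D_\infty\le1$. I expect the only genuine work to sit in the structural computation of the first paragraph --- the bookkeeping that pins down exactly which leaf pairs shift, carried out uniformly for arbitrary (not merely single-leaf) subtrees $A,B,C,D$. Everything afterwards is either the one-line feasibility check of $\delta\equiv1$ or a direct appeal to Theorem~\ref{th:PDcomp}; the secondary subtlety, flagged above, is reconciling the sharp value of $D_\infty$ delivered by that theorem with the stated equality.
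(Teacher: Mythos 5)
Your proposal is correct and takes essentially the same route as the paper: the paper's proof likewise just records the $0/1$ structure of the path-difference matrix $\Delta$ (identical to your case analysis of which leaf pairs shift by the central edge) and then reads off all estimates from Theorem \ref{th:PDcomp}. Your side remark about $D_\infty$ is also well founded: by the paper's own identity $D_\infty=\tfrac12 D^{PD}_\infty$, an NNI pair has $D_\infty(\tau,\tau')=\tfrac12$ (the constant vector $\delta_x\equiv\tfrac12$ is feasible for (\ref{eq:min1}), since $\delta_x+\delta_y=1\ge\abs{\rho_\tau(x,y)-\rho_{\tau'}(x,y)}$ and $\abs{\delta_x-\delta_y}=0$), so the stated equality $D_\infty(\tau,\tau')=1$ --- and likewise the values claimed in Example \ref{ex:2} --- carry a spurious factor of $2$, while the upper bounds $D_1\le n$, $D_2\le\sqrt n$ and the concluding chain remain valid (indeed they hold with $n/2$ and $\sqrt n/2$).
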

\begin{proof}
  Let be $\tau=\begin{tikzpicture}[xscale=0.5,yscale=0.2, baseline =10 ]
\node (A) at (0,4) [rectangle,draw=black] {$A$};
\node (B) at (0,0) [rectangle,draw=black] {$B$};
\node (C) at (2,2)  {$\bullet$};
\node (D) at (4,2)  {$\bullet$};
\node (E) at (6,0) [rectangle,draw=black] {$C$};
\node (F) at (6,4) [rectangle,draw=black] {$D$};
\draw[very thick] (A) to (C);
\draw[very thick] (B) to (C);
\draw[very thick] (C) to (D);
\draw[very thick] (D) to (E);
\draw[very thick] (D) to (F);
\end{tikzpicture}
$ and $\tau'=\begin{tikzpicture}[xscale=0.5,yscale=0.2, baseline =10 ]
\node (A) at (0,4) [rectangle,draw=black] {$A$};
\node (B) at (0,0) [rectangle,draw=black] {$C$};
\node (C) at (2,2)  {$\bullet$};
\node (D) at (4,2)  {$\bullet$};
\node (E) at (6,0) [rectangle,draw=black] {$B$};
\node (F) at (6,4) [rectangle,draw=black] {$D$};
\draw[very thick] (A) to (C);
\draw[very thick] (B) to (C);
\draw[very thick] (C) to (D);
\draw[very thick] (D) to (E);
\draw[very thick] (D) to (F);
\end{tikzpicture}
$ where $A,B,C,D$ are the four subtrees of $\tau,\tau'$ corresponding to a four-partition of $X$.

Then we observe the following structure of the matrix   $\Delta\in \NRp^{X^2}$, $\Delta_{x,y}=(\abs{\rho_{\tau}(x,y)-\rho_{\tau'}(x,y)})_{x,y\in X}$:
\begin{displaymath}
  \Delta=
  \begin{pmatrix}
    0&1&1&0\\
1&0&0&1\\
1&0&0&1\\
    0&1&1&0
  \end{pmatrix}
\end{displaymath}
or more precisely
\begin{displaymath}
  \Delta_{x,y}=\left\{
    \begin{array}[c]{cl}
      1&x\in A\cup D,y\in B\cup C\\
      1&y\in A\cup D,x\in B\cup C\\
0&\mbox{otherwise}
    \end{array}
\right.
\end{displaymath}

The estimates are now immediate from Theorem \ref{th:PDcomp}.
\end{proof}

\begin{remark}
 Similar  estimates could be done for the SPR-metrics. By  \cite{AS01} this has natural implications to the  TBR-metrics, too.  Further we see that the size of the $1-$neighbourhood of a tree $\tau\in T_1^2(X)$  in the $D_\infty-$metric is at least $n-3$. 
\end{remark}
How large are those bounds compared to the diameter of the space $T^2_1(X)$?
We have some crude estimates:
\begin{lemma}\label{lem:upperdiameter}
  For all $\tau_1,\tau_2\in T_1(X)$ it holds 
\begin{displaymath}
  \begin{array}[c]{*5c}
 D_1(\tau_1,\tau_2)&\le& n\cdot\frac{n-2}2\\
D_2(\tau_1,\tau_2)&\le&\sqrt n\cdot\frac{n-2}2\\
D_\infty (\tau_1,\tau_2)&\le &\frac{n-2}2
  \end{array}
\end{displaymath}
\end{lemma}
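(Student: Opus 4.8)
The plan is to reduce everything to the $\ell^\infty$-case and then invoke the norm comparisons already established. By Theorem~\ref{th:PDcomp} we have the exact identity $D_\infty(\tau_1,\tau_2)=\frac12 D^{PD}_\infty(\tau_1,\tau_2)$, so it suffices to bound the path difference $D^{PD}_\infty(\tau_1,\tau_2)=\max_{\set{x,y}\in\binom X2}\abs{\rho_{\tau_1}(x,y)-\rho_{\tau_2}(x,y)}$. Once I show $D_\infty(\tau_1,\tau_2)\le\frac{n-2}2$, the remaining two estimates follow immediately from the first row of Theorem~\ref{th:PDcomp}: the inequalities $D_\infty\ge\frac1{\sqrt n}D_2$ and $D_\infty\ge\frac1n D_1$ rearrange to $D_2\le\sqrt n\,D_\infty$ and $D_1\le n\,D_\infty$, which give precisely the claimed $\sqrt n\cdot\frac{n-2}2$ and $n\cdot\frac{n-2}2$.

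The heart of the argument is therefore the purely combinatorial bound $\abs{\rho_{\tau_1}(x,y)-\rho_{\tau_2}(x,y)}\le n-2$, valid for every pair $\set{x,y}$ and all unweighted $\tau_1,\tau_2\in T_1(X)$. For this I would record two elementary facts about leaf-to-leaf distances in an unweighted phylogenetic $X$-tree. On the one hand, since $\mu$ is injective on $T_1(X)$ and every edge carries weight $1$, distinct leaves satisfy $\rho_\tau(x,y)\ge 1$. On the other hand, the simple path joining $x$ and $y$ meets only internal vertices in its interior, and a phylogenetic $X$-tree on $n$ leaves has at most $n-2$ internal vertices (this maximum being attained by fully resolved binary trees). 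Hence the path uses at most $n-1$ edges, so $\rho_\tau(x,y)\le n-1$.

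Combining the two facts, for a fixed pair $\set{x,y}$ both distances lie in the integer interval $[1,n-1]$, so $\abs{\rho_{\tau_1}(x,y)-\rho_{\tau_2}(x,y)}=\max-\min\le(n-1)-1=n-2$. Taking the maximum over all pairs yields $D^{PD}_\infty(\tau_1,\tau_2)\le n-2$ and thus $D_\infty(\tau_1,\tau_2)\le\frac{n-2}2$, completing the reduction described above. Note that this is deliberately crude: the genuinely attainable distances are bounded below by $2$, which would sharpen the estimate to $\frac{n-3}2$, but the stated bound is all that is needed here.

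I expect the only real obstacle to be the structural claim that the interior of a leaf-to-leaf path contains at most $n-2$ vertices; everything else is bookkeeping. This can be justified by the standard edge/vertex count for unrooted trees whose internal vertices all have degree at least three, or directly by observing that resolving a non-binary tree never shortens a leaf-to-leaf path, so it is enough to check the bound on binary caterpillars, where the longest path traverses all $n-2$ internal vertices. The small cases $n=2,3$ hold trivially, since $T_1(X)$ is then a single point and all three right-hand sides are nonnegative.
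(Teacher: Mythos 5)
Your proof is correct and follows essentially the same route as the paper: bound every leaf-to-leaf path length between $1$ and $n-1$ edges to get $D^{PD}_\infty(\tau_1,\tau_2)\le n-2$, use the identity $D_\infty=\frac12 D^{PD}_\infty$ from Theorem~\ref{th:PDcomp}, and then transfer to $D_1$ and $D_2$ via the norm comparisons $D_\infty\ge\frac1{\sqrt n}D_2\ge\frac1n D_1$ in that theorem. Your added justification of the edge-count bound via the number of internal vertices, and the remark on small $n$, only flesh out what the paper asserts in one line.
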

\begin{proof}
  $D_\infty (\tau_1,\tau_2)\le \frac{n-1}2$ follows immediately from $D^{PD}_\infty (\tau_1,\tau_2)\le {n-2}$ which holds since all paths in $\tau_1,\tau_2$ have at least one and at most $(n-1)$ edges. Theorem \ref{th:PDcomp} implies the other two inequalities and the estimate on the NNI-metric are immediate consequences of its definition. 
\end{proof}
Now we want to show that there are trees such that the distance between them is of  the same order in $n$. 
\begin{lemma}\label{lem:cater}
  Let us be given   $n=4m+1$ for some $m\in \mathbb{N}$, $m\ge1$, $X=\set{1,\dots,4m+1}$. Suppose  $\tau$ is the  unrooted caterpillar tree with cherries $\set{1,2}$ and $\set{4m,4m+1}$:
  \begin{displaymath}
    \tau=\begin{tikzpicture}[xscale=1.2,yscale=0.7, baseline =18 ]
\node (1) at (0,2) [rectangle,rounded corners,draw=black] {$1$};
\node (2) at (0,0) [rectangle,rounded corners,draw=black] {$2$};
\node (12s) at (1,1)  {$\bullet$};
\node (3) at (2,2) [rectangle,rounded corners,draw=black] {$3$};
\node (3s) at (2,1)  {$\bullet$};
\node (4) at (3,0) [rectangle,rounded corners,draw=black] {$4$};
\node (4s) at (3,1)  {$\bullet$};
\node (5) at (4,2) [rectangle,rounded corners,draw=black] {$5$};
\node (5s) at (4,1)  {$\bullet$};
\node (cdots) at (5,1)  {$\cdots$};
\node (4m-2) at (6,0) [rectangle,rounded corners,draw=black] {$4m-2$};
\node (4m-2s) at (6,1)  {$\bullet$};
\node (4m-1) at (7,2) [rectangle,rounded corners,draw=black] {$4m-1$};
\node (4m-1s) at (7,1)  {$\bullet$};
\node (4ms) at (8,1)  {$\bullet$};
\node (4m) at (9,0) [rectangle,rounded corners,draw=black] {$4m$};
\node (4m+1) at (9,2) [rectangle,rounded corners,draw=black] {$4m+1$};
\draw[very thick] (1) to (12s);
\draw[very thick] (2) to (12s);
\draw[very thick] (3) to (3s);
\draw[very thick] (4) to (4s);
\draw[very thick] (5) to (5s);
\draw[very thick] (5s) to (cdots);
\draw[very thick] (cdots) to (4m-2s);
\draw[very thick] (4m-2) to (4m-2s);
\draw[very thick] (4m-1) to (4m-1s);
\draw[very thick] (4m) to (4ms);
\draw[very thick] (4m+1) to (4ms);
\draw[very thick] (12s) to (3s);
\draw[very thick] (3s) to (4s);
\draw[very thick] (4s) to (5s);
\draw[very thick] (4m-2s) to (4m-1s);
\draw[very thick] (4m-1s) to (4ms);
\end{tikzpicture}
  \end{displaymath}
and $\tau'$  is obtained from $\tau$ by reversing the order of  the even labels, i.e. $2i$ is interchanged with $2(2m+1-i)$ for $i=1,\dots,2m$: 
  \begin{displaymath}
    \tau'=\begin{tikzpicture}[xscale=1.2,yscale=0.7, baseline =18 ]
\node (1) at (0,2) [rectangle,rounded corners,draw=black] {$1$};
\node (2) at (0,0) [rectangle,rounded corners,draw=black] {$4m$};
\node (12s) at (1,1)  {$\bullet$};
\node (3) at (2,2) [rectangle,rounded corners,draw=black] {$3$};
\node (3s) at (2,1)  {$\bullet$};
\node (4) at (3,0) [rectangle,rounded corners,draw=black] {$4m-2$};
\node (4s) at (3,1)  {$\bullet$};
\node (5) at (4,2) [rectangle,rounded corners,draw=black] {$5$};
\node (5s) at (4,1)  {$\bullet$};
\node (cdots) at (5,1)  {$\cdots$};
\node (4m-2) at (6,0) [rectangle,rounded corners,draw=black] {$4$};
\node (4m-2s) at (6,1)  {$\bullet$};
\node (4m-1) at (7,2) [rectangle,rounded corners,draw=black] {$4m-1$};
\node (4m-1s) at (7,1)  {$\bullet$};
\node (4ms) at (8,1)  {$\bullet$};
\node (4m) at (9,0) [rectangle,rounded corners,draw=black] {$2$};
\node (4m+1) at (9,2) [rectangle,rounded corners,draw=black] {$4m+1$};
\draw[very thick] (1) to (12s);
\draw[very thick] (2) to (12s);
\draw[very thick] (3) to (3s);
\draw[very thick] (4) to (4s);
\draw[very thick] (5) to (5s);
\draw[very thick] (5s) to (cdots);
\draw[very thick] (cdots) to (4m-2s);
\draw[very thick] (4m-2) to (4m-2s);
\draw[very thick] (4m-1) to (4m-1s);
\draw[very thick] (4m) to (4ms);
\draw[very thick] (4m+1) to (4ms);
\draw[very thick] (12s) to (3s);
\draw[very thick] (3s) to (4s);
\draw[very thick] (4s) to (5s);
\draw[very thick] (4m-2s) to (4m-1s);
\draw[very thick] (4m-1s) to (4ms);
\end{tikzpicture}
  \end{displaymath}

Then 
\begin{displaymath}
  \begin{array}[c]{*5c}
 D_1(\tau,\tau')&\ge& \tilde D_1(\tau,\tau')&\ge&4m^2-4m+2\\
D_2(\tau',\tau')&\ge& \tilde D_2(\tau,\tau')&\ge&\sqrt{\frac{16}3m^3-8m^2+\frac{32}3m-6}\\
&&D_\infty (\tau,\tau')&= &2m-1
  \end{array}
\end{displaymath}
\end{lemma}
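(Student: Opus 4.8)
The plan is to reduce everything to the single matrix $\Delta_{x,y}=\abs{\rho_\tau(x,y)-\rho_{\tau'}(x,y)}$ and to exploit two facts already available: that $D_i\ge\tilde D_i$ for every $i$ (Theorem \ref{th:PDcomp}), so it suffices to bound the $\tilde D_i$ from below, and that $D_\infty=\tilde D_\infty=\tfrac12 D^{PD}_\infty=\tfrac12\max_{x,y}\Delta_{x,y}$ is an exact identity. Recall that $\tilde D_i(\tau,\tau')$ is the minimum of $\norm\delta_i$ over $\delta\in\NRp^X$ subject only to $\delta_x+\delta_y\ge\Delta_{x,y}$, see (\ref{eq:min2}); hence any vertex-disjoint family of index pairs yields a lower bound on $\tilde D_1$ and (after one elementary step) on $\tilde D_2$.

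First I would set up coordinates on the caterpillar. Number the $4m-1$ spine vertices $1,\dots,4m-1$ so that leaf $k$ sits over spine vertex $s_\tau(k)$; then $\rho_\tau(x,y)=\abs{s_\tau(x)-s_\tau(y)}+2$ for $x\ne y$, and likewise for $\tau'$. A direct computation gives $s_\tau(2i)=2i-1$ and, since reversing the even labels sends leaf $2i$ to the mirror slot, $s_{\tau'}(2i)=4m+1-2i$, while every odd leaf keeps its slot, $s_{\tau'}=s_\tau$ on odd labels. Two consequences are crucial. If $x,y$ have equal parity the two $+2$'s and the slot-differences cancel, so $\Delta_{x,y}=0$; thus $\Delta$ is supported on even--odd pairs and the whole problem is bipartite between the $2m$ even and the $2m+1$ odd leaves. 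Moreover, for an even leaf $2i$ and any odd leaf $b$ the reverse triangle inequality gives $\Delta_{2i,b}=\bigl|\,\abs{s_\tau(2i)-s_\tau(b)}-\abs{s_{\tau'}(2i)-s_\tau(b)}\,\bigr|\le\abs{s_\tau(2i)-s_{\tau'}(2i)}=:w_i=2\abs{2m+1-2i}$, with equality exactly when the slot $s_\tau(b)$ lies outside the open interval spanned by $2i-1$ and $4m+1-2i$.

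From here the three bounds follow. For $D_\infty$ the maximal displacement is $w_1=4m-2$, attained at the cherry pair $\Delta_{1,2}$ (leaf $1$ fixed at slot $1$, leaf $2$ sent to slot $4m-1$), and no $\Delta_{x,y}$ can exceed it since all slots lie in $[1,4m-1]$; the identity then gives $D_\infty=\tfrac12(4m-2)=2m-1$. For $D_1$ and $D_2$ I would build one explicit matching $M$ pairing each even leaf with an odd leaf realizing $\Delta=w_i$: namely $2i\mapsto 2i-1$ for $i\le m$ together with the mirror pairing $4m+2-2i\mapsto 4m+3-2i$ for $i\le m$. These $2m$ pairs are vertex-disjoint (the odd partners are $1,3,\dots,2m-1$ on the left and $2m+3,\dots,4m+1$ on the right, all distinct, leaving only leaf $2m+1$ unused). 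Summing the corresponding constraints gives, for any feasible $\delta$, the bound $\norm\delta_1\ge\sum_{M}(\delta_a+\delta_b)\ge\sum_{i=1}^{2m}w_i$ and, via $\delta_a^2+\delta_b^2\ge\tfrac12(\delta_a+\delta_b)^2$ on the disjoint pairs, $\norm\delta_2^2\ge\tfrac12\sum_{i=1}^{2m}w_i^2$. Evaluating the two arithmetic sums (the $w_i$ run through $2,6,\dots,4m-2$ twice) and comparing with the claimed right-hand sides finishes the proof; in fact these matching bounds already dominate the stated ones, which is all that is needed.

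The routine but error-prone part, the one I would guard most carefully, is the coordinate bookkeeping of the second paragraph: pinning down $s_\tau,s_{\tau'}$ at the two cherries and at the central slot $2m$, checking that the equality $\Delta_{2i,b}=w_i$ really holds for the chosen partners (this is where the ``outside the interval'' condition must be verified for every $i$, including the central pair $i=m$), and confirming via a Hall-type distinctness check that the chosen odd partners are genuinely distinct so that $M$ is a matching. Everything else is the reverse triangle inequality, the elementary norm inequality, and the summation of two arithmetic progressions.
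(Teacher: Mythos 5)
Your proposal is correct, and at its core it runs on the same engine as the paper's proof: $D_i\ge\tilde D_i$ and $D_\infty=\tfrac12 D^{PD}_\infty$ from Theorem \ref{th:PDcomp}, plus lower bounds on $\tilde D_1,\tilde D_2$ obtained by summing the constraints $\delta_x+\delta_y\ge\abs{\rho_\tau(x,y)-\rho_{\tau'}(x,y)}$ of (\ref{eq:min2}) over vertex-disjoint pairs, with $\delta_x^2+\delta_y^2\ge\tfrac12(\delta_x+\delta_y)^2$ for the quadratic case. Where you genuinely depart from the paper is in the choice of pairs and in how the constraint values are computed. The paper takes the consecutive pairs $(1,2),(3,4),\dots,(4m-1,4m)$ and reads the values off an explicit distance table; you pair each even leaf $2i$ with the odd neighbour on the side it moves \emph{away} from ($2i\mapsto 2i-1$ on the left half, $2j\mapsto 2j+1$ on the right half), and your reverse-triangle-inequality criterion shows that every such pair realizes the full displacement $w_i=2\abs{2m+1-2i}$. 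This buys strictly stronger bounds, $\tilde D_1\ge 4m^2$ and $\tilde D_2^2\ge\tfrac13\left(16m^3-4m\right)$, versus the stated $4m^2-4m+2$ and $\tfrac{16}3m^3-8m^2+\tfrac{32}3m-6$; the lemma then follows from the two elementary comparisons $4m\ge2$ and $8m^2-12m+6\ge0$, which you only gesture at (``these matching bounds already dominate the stated ones'') but should write out, since they are the sole link between your bounds and the actual claim. Incidentally, your slot bookkeeping also sidesteps a small arithmetic slip in the paper: the paper's left-half constraint values $4m-8,4m-12,\dots$ understate the true values $4m-6,4m-10,\dots$ (harmless, since understating a constraint only weakens the lower bound, but it is exactly why the paper's pairing totals $4m^2-4m+2$ rather than the $4m^2-2m$ it actually yields), while the paper's right-half pairing, unlike yours, genuinely loses $2$ per pair because there the odd partner sits strictly inside the displacement interval.
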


\begin{proof}
  It is easy to see that for $1<i<j<n=4m+1$
  \begin{displaymath}
    \rho(i,j)=\left\{
      \begin{array}[c]{cl}
        j&i=1,2,\quad j\le 4m\\
4m&i=1,2,\quad j=4m+1\\
4m+1-i&3\le i,\quad j=4m,4m+1\\
j-i+2&\mbox{otherwise}
      \end{array}
\right.
  \end{displaymath}
First, the formula for $D_\infty(\tau,\tau')$ follows immediately from Theorem \ref{th:PDcomp}. 

Continuing,  we obtain from (\ref{eq:min2}) the following constraints
\begin{displaymath}
  \begin{array}[c]{*3c}
    \delta_1+\delta_2&\ge&4m-2\\
\delta_3+\delta_4&\ge& 4m-8\\
\delta_5+\delta_6&\ge&4m-12\\
&\vdots&\\
\delta_{2m-1}+\delta_{2m}&\ge&0\\
&\vdots&\\
\delta_{4m-3}+\delta_{4m-2}&\ge&4m-8\\
\delta_{4m-1}+\delta_{4m}&\ge&4m-4
  \end{array}
\end{displaymath}
Summing up this constraints directly gives the lower bound for $\tilde D_1(\tau,\tau')$.

Now $a^2+b^2\ge \frac{(a+b)^2}2$ gives us 
\begin{displaymath}
  \begin{array}[c]{*3c}
    \delta_1^2+\delta_2^2&\ge&2(2m-1)^2\\
\delta_3^2+\delta_4^2&\ge& 2(2m-4)^2\\
\delta_5^2+\delta_6^2&\ge&2(2m-6)^2\\
&\vdots&\\
\delta_{2m-1}^2+\delta_{2m}^2&\ge&0\\
&\vdots&\\
\delta_{4m-3}^2+\delta_{4m-2}^2&\ge&2(2m-4)^2\\
\delta_{4m-1}^2+\delta_{4m}^2&\ge&8(m-1)^2
  \end{array}
\end{displaymath}
Again summing up this yields the lower bound for $\tilde D_2(\tau,\tau')$. 
\end{proof}
\begin{remark}
  Using the results from the next section and computation similar to the second next section we could derive the same order of magnitude of $D_i$ for general $n$. 
\end{remark}
\section{Local Properties}

From Lemma \ref{lem:algebra} we obtain   for ``small'' semimetrics $\rho',\rho''\in M(X)$ immediately that
\begin{displaymath}
  D_i(\rho+\rho',\rho+\rho'')\le D_i(\rho',\rho'').
\end{displaymath}
 Notably,  we can even sharpen this estimate: 
\begin{lemma}\label{lem:localtilde}
  For all $\rho\in M_{>0}(X)$ there is a $\varepsilon>0$ such that for $\rho',\rho''\in M(X)$ with $D_\infty(0,\rho'),D_\infty(0,\rho'')<\varepsilon$
\begin{displaymath}
  D_i(\rho+\rho',\rho+\rho'')= \tilde D_i(\rho',\rho'')
\end{displaymath}
\end{lemma}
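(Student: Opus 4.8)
The plan is to exploit that $D_i$ and $\tilde D_i$ arise from the same minimisation problems (\ref{eq:min1}) and (\ref{eq:min2}), which differ only by the upper-bound constraints $\abs{\delta_x-\delta_y}\le\rho(x,y)+\rho'(x,y)$, and to show that for a strict metric $\rho$ these upper bounds are inactive once $\rho',\rho''$ are small. The quantitative reason is that the upper bounds carry a uniform strictly positive slack coming from the finiteness of $X$ and the positivity of $\rho$, whereas any optimiser of the problem without upper bounds has size of order $D_\infty(\rho',\rho'')=\mathrm O(\varepsilon)$.

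First I would record the algebraic observation that drives everything: the lower-bound data are invariant under adding $\rho$. Since $(\rho+\rho')(x,y)-(\rho+\rho'')(x,y)=\rho'(x,y)-\rho''(x,y)$, the constraints $\delta_x+\delta_y\ge\abs{\cdots}$ and the objective $\norm\delta_i$ in the program (\ref{eq:min1}) for the pair $(\rho+\rho',\rho+\rho'')$ coincide \emph{exactly} with those of the program (\ref{eq:min2}) for the pair $(\rho',\rho'')$. The sole difference is that (\ref{eq:min1}) additionally imposes
\[
  \abs{\delta_x-\delta_y}\le(\rho+\rho')(x,y)+(\rho+\rho'')(x,y)=2\rho(x,y)+\rho'(x,y)+\rho''(x,y).
\]
Thus (\ref{eq:min1}) minimises the same objective over a smaller feasible set, giving $D_i(\rho+\rho',\rho+\rho'')\ge\tilde D_i(\rho',\rho'')$ for free, in accordance with Theorem \ref{th:PDcomp}.

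For the reverse inequality I would show that an optimiser of the problem without upper bounds automatically respects those bounds. Put $\mu=\min_{x\ne y}\rho(x,y)$, which is strictly positive because $\rho\in M_{>0}(X)$ and $X$ is finite, and choose $\varepsilon=\mu/2$. Take an optimal $\delta^*$ for $\tilde D_i(\rho',\rho'')$; by Lemma \ref{lem:upperbound} (which covers (\ref{eq:min2})) we may assume $\delta^*_x\le 2D_\infty(\rho',\rho'')$. The triangle inequality for $D_\infty$ together with the hypothesis gives $D_\infty(\rho',\rho'')\le D_\infty(0,\rho')+D_\infty(0,\rho'')<2\varepsilon=\mu$, whence $0\le\delta^*_x<2\mu$ for every $x$. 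Consequently $\abs{\delta^*_x-\delta^*_y}<2\mu\le 2\rho(x,y)\le 2\rho(x,y)+\rho'(x,y)+\rho''(x,y)$, i.e.\ $\delta^*$ satisfies the upper-bound constraints and is feasible for (\ref{eq:min1}) applied to $(\rho+\rho',\rho+\rho'')$. Hence $D_i(\rho+\rho',\rho+\rho'')\le\norm{\delta^*}_i=\tilde D_i(\rho',\rho'')$, and combining the two inequalities yields the stated equality.

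The only delicate points are bookkeeping rather than conceptual: one must notice that adding $\rho$ leaves the lower-bound data untouched, and one must feed the a priori bound of Lemma \ref{lem:upperbound} into the estimate for $\delta^*$. I do not expect a genuine obstacle beyond checking that this a priori bound is indeed available for the $\tilde D_i$-problem (it is, since Lemma \ref{lem:upperbound} is stated for both (\ref{eq:min1}) and (\ref{eq:min2})) and that the argument is uniform in $i$, which it is because only the shared objective $\norm\cdot_i$ and the feasibility of $\delta^*$ enter.
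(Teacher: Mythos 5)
Your proof is correct and follows essentially the same route as the paper: both use Lemma \ref{lem:upperbound} to obtain the a priori bound $\delta_x\le 2D_\infty$, choose $\varepsilon=\frac12\min\set{\rho(x,y):\set{x,y}\in\binom X2}$, observe that adding $\rho$ leaves the lower-bound data $\abs{\rho'(x,y)-\rho''(x,y)}$ and the objective unchanged, and conclude that the constraints $\abs{\delta_x-\delta_y}\le 2\rho(x,y)+\rho'(x,y)+\rho''(x,y)$ are automatically satisfied and hence removable. The only cosmetic difference is that you argue the two inequalities separately and invoke the triangle inequality $D_\infty(\rho',\rho'')\le D_\infty(0,\rho')+D_\infty(0,\rho'')$ explicitly, where the paper compresses this into a chain of equivalent programs.
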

\begin{remark}
  For $\tau \in T(X)$ the condition $\rho\in M_{>0}(X)$ just means that the labeling is injective. Thus it is weaker than to say that $\tau$ is an inner point  of some orthant of tree space as considered in  \cite{BHV01}, meaning the tree is binary and all edge lengths are positive.   

Further, this result is another proof that the $\tilde D_i$ are really metrics, see Lemma \ref{lem:tildemetrics}. 
\end{remark}
In the following, let  $0\in M(X)$ denote  the zero semimetric on $X$.
\begin{proof}[Proof of Lemma \ref{lem:localtilde}]
 By Lemma \ref{lem:upperbound}, we may add the constraints $\delta_x\le 2D_\infty(\rho+\rho',\rho+\rho'')=2D(\rho',\rho'')$ to (\ref{eq:min1}) and (\ref{eq:min2}) to get problems (\ref{eq:min1b}) and (\ref{eq:min2b}), respectively. 

Now it is easy to derive  that for
\begin{displaymath}
  \varepsilon=\frac12\min\set{\rho(x,y):\set{x,y}\in \binom X2}
\end{displaymath}
and $\rho,\rho'\in M(X)$,  $D_\infty(0,\rho'),D_\infty(0,\rho'')<\varepsilon$, the constraints
\begin{displaymath}
  \abs{\delta_x-\delta_y}\le 2\rho(x,y)+\rho'(x,y)+\rho''(x,y)
\end{displaymath}
are automatically fulfilled. Removing them yields problem (\ref{eq:min2b}).  
\end{proof}

\begin{example}\label{ex:weights1tree}
  So it is interesting to ask for $\tilde D_i(0,\tau_{A,B}^l)$ for a very simple $\tau$, we choose  $\tau=\begin{tikzpicture}[xscale=0.5,yscale=0.2, baseline =1 ]
\node (A) at (0,1) [rectangle,draw=black] {$A$};
\node (B) at (2,1) [rectangle,draw=black] {$B$};
\draw[very thick] (A) to (B);
\node at (1,1) [above] {$l$};
\end{tikzpicture}$ where $A|B$ is a split of $X$ and $l$ is the length of this split.

We see that the constraints from (\ref{eq:min2}) turn into
\begin{displaymath}
    \begin{array}[c]{*4c}
 \delta_x&\ge&0&x\in X\\
    \delta_x+\delta_y&\ge&l&x \in A, y\in B
  \end{array} 
\end{displaymath}
Now  (\ref{eq:min2}) is symmetric under permutations  of $A$ and under  permutations of $B$. Thus  we may simply assume that 
\begin{displaymath}
\delta_x=\left\{
  \begin{array}[c]{*3c}
    a&x\in A\\
b&x\in B
  \end{array}
\right.
\end{displaymath}
for some $a,b\in \NRp$ with $a+b\ge l$.

For computing $\tilde D_1$, we find
\begin{displaymath}
  \norm\delta_1=\#A a+\# Bb\ge \#A a+\# B (l-a).
\end{displaymath}
The later function of $a$ has  minimum $\tilde D_1(0,\tau_{A,B}^l)=\min(\#A,\# B) l$. 

Similarly we find for $\tilde D_2$
\begin{displaymath}
  \norm\delta_2^2=\#A a^2+\# Bb^2\ge \#A a^2+\# B (l-a)^2.
\end{displaymath}
Now the minimum is $\tilde D_2(0,\tau_{A,B}^l)=\sqrt{\frac{\#A\# B}{n}} l$.

Summarisingly, we observe that  different  splits of a tree get different weights. 

Moreover, we see that the minimal points $\delta_i^*$ fulfil \emph{all} contraints in (\ref{eq:min1}). This shows  $D_i=\tilde D_i$. Further, the same computations are valid if we compute $D_i(\tau_{A,B}^l,\tau_{A,B}^{l'})$ with $\abs{l-l'}$ replacing $l$:
\begin{eqnarray*}
  D_1(\tau_{A,B}^l,\tau_{A,B}^{l'})&=&\min(\#A,\# B)\abs{l-l'}\\
  D_2(\tau_{A,B}^l,\tau_{A,B}^{l'})&=&\sqrt{\frac{\#A\# B}{n}}\abs{l-l'}\\
\end{eqnarray*}

\end{example}

\begin{example}\label{ex:ll'}
  We want to compute $\tilde D_i(0,\tau_{A,B,C}^{l,l'})$  for
  \begin{displaymath}
\tau=\begin{tikzpicture}[xscale=0.5,yscale=0.2, baseline =1 ]
\node (A) at (0,1) [rectangle,draw=black] {$A$};
\node (B) at (2,1) [rectangle,draw=black] {$B$};
\node (C) at (4,1) [rectangle,draw=black] {$C$};
\draw[very thick] (A) to (B);
\draw[very thick] (B) to (C);
\node at (1,1) [above] {$l$};
\node at (3,1) [above] {$l'$};
\end{tikzpicture}
\end{displaymath}
This tree is the essence of two trees with same shape but differing in the lengths of two edges.  

Again, symmetry gives us to consider only \begin{displaymath}
\delta_x=\left\{
  \begin{array}[c]{*3c}
    a&x\in A\\
b&x\in B\\
c&x\in C
  \end{array}
\right.
\end{displaymath}
for some $a,b,c\in \NRp$ which fulfil now
\begin{equation}\label{eq:abc}
  \begin{array}[c]{*3c}
    a+b&\ge&l\\
b+c&\ge& l'\\
a+c&\ge& l+l'
  \end{array}
\end{equation}
which gives us a linear or quadratic program in $\NRp^3$.

For computing $\tilde D_1$, we want
\begin{displaymath}
  \#Aa+\#Bb+\#Cc\mapsto\min
\end{displaymath}
on this set. We know, that this minimum is achieved in a corner of the feasible set.
 But, we see easily that not all inequalties in (\ref{eq:abc}) could be equalities unless $b=0$. Thus at least one of $a,b,c$ must be zero and we obtaine the minimal value as
 \begin{displaymath}
   \min\set{\#A l+\#Cl',(\#B+\#C)l+\#C l',\#A l+(\#B +\#C)l'}
 \end{displaymath}
A distinction of cases whether $\#A\gtreqless \#B+\#C$ and $\#C\gtreqless \#A+\#B$ gives us in any case one of the value as minimum. Thus in any case, $\tilde D_1(0,\tau_{A,B,C}^{l,l'})$ is a linear combination of $l$ and $l'$, i.e. some weighted $\ell^1-$ distance.

The computation of $\tilde D_2$ would mean solving the quadratic program 
\begin{displaymath}
  \#Aa^2+\#Bb^2+\#Cc^2\mapsto\min
\end{displaymath}
For this problem, we only know that the solution is the projection of the null vector onto the  affine hyperspace determined by some \emph{face} of the feasible set. 
This projection is linear in $l$ and $l'$. 
This means that $\tilde D_2$ is the minimum of several quadratic functions in $l,l'$.    Since the algebra is rather tedious we stop here now with the indication that this minimum is just a single quadratic function similar to the linear case before. A numerical test for several cardinalities and random lengths $l,l'$ provided in Figure \ref{fig:notquadratic} shows that the parallelogramm equality is fulfilled in all considered situations. Thus  the local geometry seems to be euclidean. This was our original expectation when we introduced $D_2$. But even if this would be  true in general,  we are already asured   by the previous example that we do not to compute the geodesic metric from \cite{BHV01}.   
\begin{figure}
  \centering
  
\includegraphics[angle=-90,width=0.7\textwidth]{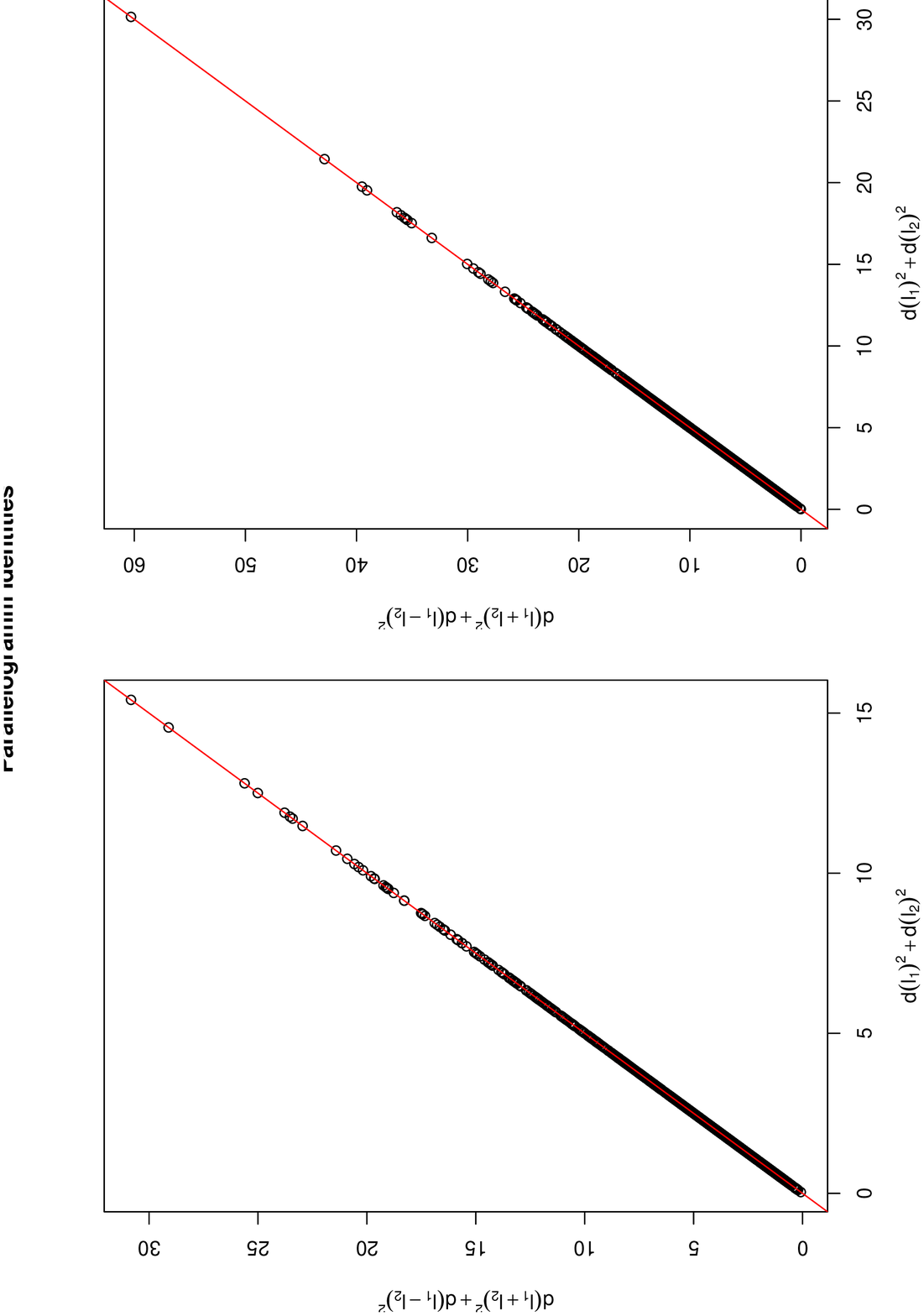}

\includegraphics[angle=-90,width=0.7\textwidth]{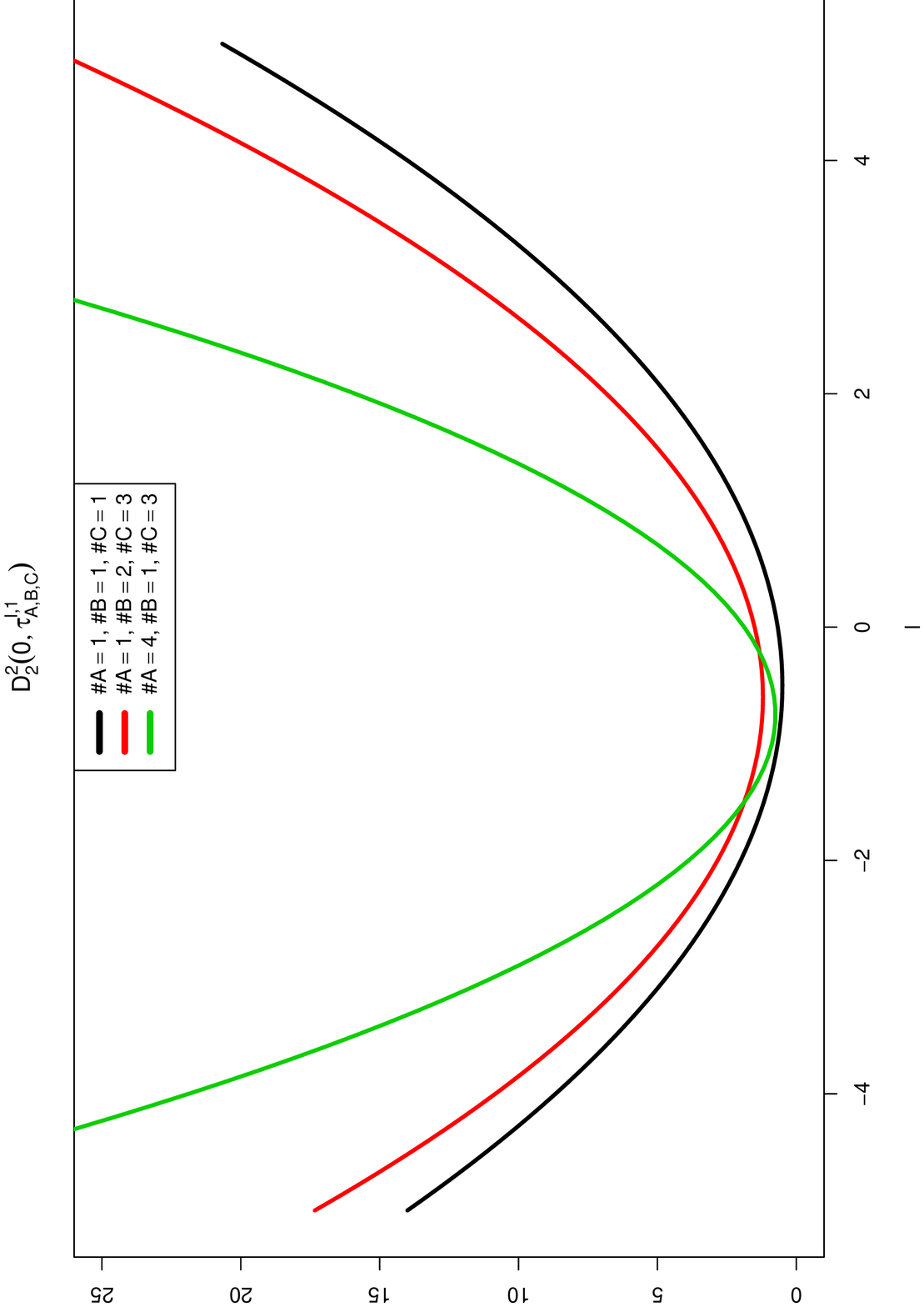}

  \caption{Test of the parallelogramm equality for random lengths $l,l'$ and $\#A=\#B=\#C=1$ (above left), $\#A=1,\#B=2,\#C=3$ (above right). On the $x-$axis $\tilde D_2(0,\tau_{A,B,C}^{l_1,l_1'})^2+\tilde D_2(0,\tau_{A,B,C}^{l_2,l'_2})^2$ is presented. On the $y-$axis $\tilde D_2(0,\tau_{A,B,C}^{l_1+l_2,l_1'+l_2'})^2+\tilde D_2(0,\tau_{A,B,C}^{l_1-l_2,l'_1-l'_2})^2$ is plotted. Below, the curves  $l\mapsto \tilde D_2(0,\tau_{A,B,C}^{l,1})^2$ for different scenarios on $\#A,\#B,\#C$ are plotted. }
  \label{fig:notquadratic}
\end{figure}




\end{example}

\section{Monotony}\label{sec:monoton}
For any $X_0$-tree $\tau$ let $\tau|_X$ denote the restriction to $X\subseteq X_0$. Observe that for $\tau\in T_1(X_0)$ in general  $\tau|_X\notin T_1(X)$. 
\begin{lemma}
  Let $X_0\supseteq X$ and $\tau,\tau'\in T(X_0)$. Then for $i=1,2,\infty$ 
  \begin{displaymath}
    \begin{array}[c]{*3c}
    D_i(\tau,\tau')&\ge& D_i(\tau|_X,\tau'|_X)\\
    \tilde D_i(\tau,\tau')&\ge& \tilde D_i(\tau|_X,\tau'|_X)
  \end{array}\end{displaymath}
\end{lemma}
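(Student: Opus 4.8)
The plan is to reduce the claim to a statement about semimetrics and then exploit the minimization formulations \eqref{eq:min1} and \eqref{eq:min2}, which handle $D_i$ and $\tilde D_i$ simultaneously. First I would write $\rho=d_\tau|_{\binom{X_0}2}$ and $\rho'=d_{\tau'}|_{\binom{X_0}2}$ and observe that passing to the minimal subtree spanning $X$ does not change the path length between any two leaves of $X$; hence $\tau|_X$ induces exactly the restricted semimetric $\rho|_{\binom X2}$, and likewise $\tau'|_X$ induces $\rho'|_{\binom X2}$. So it suffices to prove, for $\rho,\rho'\in M(X_0)$ and $X\subseteq X_0$, the inequality $D_i(\rho,\rho')\ge D_i(\rho|_{\binom X2},\rho'|_{\binom X2})$ together with its analogue for $\tilde D_i$.

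Next I would take an optimal vector $\delta^*\in\NRp^{X_0}$ for the problem \eqref{eq:min1} defining $D_i(\rho,\rho')$ (respectively \eqref{eq:min2} for $\tilde D_i$); such a minimizer exists by the argument of Lemma \ref{lem:di*}. Restricting $\delta^*$ to the coordinates indexed by $X$ gives $\delta^*|_X\in\NRp^X$. The key observation is that the constraint system for the $X$-problem is precisely the sub-collection of the $X_0$-constraints indexed by pairs $\set{x,y}\in\binom X2$, with $\rho,\rho'$ replaced by their restrictions, which agree on these pairs. Therefore $\delta^*|_X$ is automatically feasible for the corresponding problem on $X$.

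Finally, since all entries of $\delta^*$ are nonnegative, every $\ell^i$-norm is monotone under deletion of coordinates, so $\norm{\delta^*|_X}_i\le\norm{\delta^*}_i$. Combining feasibility with this inequality yields
\begin{displaymath}
  D_i(\rho|_{\binom X2},\rho'|_{\binom X2})\le\norm{\delta^*|_X}_i\le\norm{\delta^*}_i=D_i(\rho,\rho'),
\end{displaymath}
and the identical argument applied to \eqref{eq:min2} gives the statement for $\tilde D_i$. Equivalently, one could argue through Lemma \ref{lem:DibyextensionXX'}: restricting any extension $\bar d\in E(\rho,\rho')$ to $X\cup X'$ lands in $E(\rho|_{\binom X2},\rho'|_{\binom X2})$ and only drops (nonnegative) coordinates from the objective, so the infimum can only decrease.

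I expect the only genuine point to verify is the first step, namely that $\tau|_X$ really carries the restricted metric; this is where working in the generalized tree space matters, since $\tau|_X$ need not lie in $T_1(X)$ but does lie in $T(X)$ and its induced distances are the restrictions of those of $\tau$. Everything after that is immediate: feasibility is inherited by sub-collections of constraints, and the $\ell^i$-norms are coordinatewise monotone on $\NRp^{X_0}$, so I do not anticipate a substantial obstacle beyond bookkeeping.
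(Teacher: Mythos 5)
Your proposal is correct and takes essentially the same route as the paper: the paper restricts an optimal solution (there, $d_i^*\in E(\rho,\rho')$ from Lemma \ref{lem:di*}, restricted to $X\cup X'$; in your main variant, the optimal $\delta^*$ of (\ref{eq:min1}) or (\ref{eq:min2}) restricted to the $X$-coordinates), notes that feasibility is inherited, and finishes with the coordinatewise monotonicity $\norm{(\delta_x)_{x\in X_0}}_i\ge\norm{(\delta_x)_{x\in X}}_i$. Your closing alternative via Lemma \ref{lem:DibyextensionXX'} is in fact verbatim the paper's own argument, so the two proofs coincide up to which equivalent formulation of $D_i$ is quoted.
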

\begin{proof}
  This follows immediately from the same inequalities for semimetrics on $X_0$. Then, restricting $d^*_i\in E(\rho,\rho')$ from Lemma \ref{lem:di*} to $X\cup X'$ yields an element of $E(\rho|_X,\rho'|_X)$. Moreover, 
  \begin{displaymath}
    \norm{(\delta_x)_{x\in X_0}}_i\ge  \norm{(\delta_x)_{x\in X}}_i
  \end{displaymath}
for $\delta\in \NRp^{X_0}$ completes the calculation. 
\end{proof}
\begin{remark}
  This  result naturally holds for many other phylogenetic metrics:  for the pathwise difference, NNI-, SPR-, TBR- and maximum parsimony metrics,  for example. For the tree rearrangement metrics is was shown in \cite[Lemma 2.2]{AS01}.
\end{remark}

\section{Implementation and numerical examples}
\label{sec:numeric}
The different metrics were implemented  by   \texttt{R} \cite{R-source} programs.  For solving  linear and quadratic  programs  the \texttt{glpkAPI} library \cite{glpkAPI} and \texttt{quadprog} library \cite{quadprog} were used, respectively. The corresponding \texttt{R}-script can be downloaded from the website \cite{gromovsource}.
Some testing showed best performance in terms of computing time for the dual simplex algorithm in the $\ell^1$-case. The computing time for obtaining the distance between  random trees of size 100 was around 0.3s which is quite reasonable, see Figure \ref{fig:times}. It also compares with the computing time of the geodesic distance.  The random trees were generated by the function \texttt{rtree} of the \texttt{R} library \texttt{phangorn} \cite{phan}.
\begin{figure}
  \centering

  \includegraphics[angle=-90,width=0.6\textwidth]{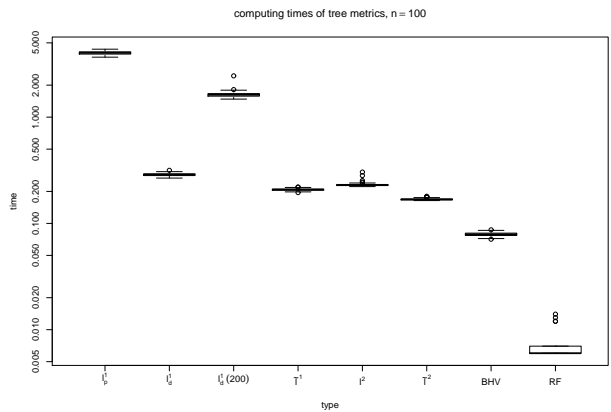}
  \caption{Computing times different metrics (logarithmic scale) for random trees with $n=100$ using the dual simplex algorithm.  From left:  $D_1$ but with primal simplex algorithm, $D_1$,  $D_1$ for $n=200$,  $\tilde D_1$, $D_2$, $\tilde D_2$, the geodesic and the Robinson-Foulds metric.}
  \label{fig:times}
\end{figure}

We also compared $D_i$ and $\tilde D_i$ with several other phylogenetic  metrics, essentially the pathwise difference, the geodesic distance and the Robinson-Foulds metric, for $n=10$ leaves. For the computation of the geodesic (BHV-) metric the \texttt{R}-package \texttt{distory} \cite{distory} was used.  The results are presented in Figure \ref{fig:distcont}. Numerially, we could observe $D_i=\tilde D_i$ in all cases, seee Figure \ref{fig:gleich} at the end of the paper.   A remarkable correlation between the different Gromov-type and the pathwise difference metrics can be observed. There is not much correlation to the geodesic distance. May be, the different weigths on the internal edges (see example \ref{ex:weights1tree}) are responsible for that. 

Similar pictures are found for unweighted trees, see Figure \ref{fig:distdiscr}. Interestingly, $D_1=\tilde D_1$ turns out to integer-valued now, see the same figure. That is quite a bit surprising since the matrix corresponding to the linear program (\ref{eq:min2}) is not totally unimodular in the sense of \cite{Ber}, it contains the $3\times3$  submatrix $
\begin{pmatrix}
  1&1&0\\1&0&1\\0&1&1
\end{pmatrix}$ with determinant $-2$.

Random caterpillars are  interesting in their own, the results are presented in Figure \ref{fig:distcaterpillar}. We observe that we obtain a much larger maximum of 28 for  $D_1$  (over the sample) than from random trees. In comparison, the  lower bound from   Lemma \ref{lem:cater} would be much smaller: $\frac{n^2}4-n+2=17$.

\begin{figure}
  \centering

  \includegraphics[angle=-90,width=0.6\textwidth]{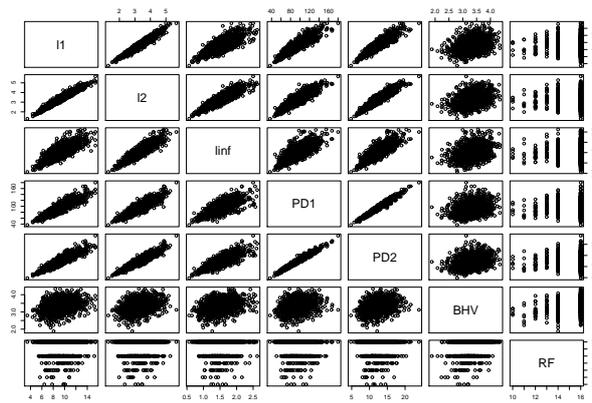}

 \includegraphics[angle=-90,width=0.6\textwidth]{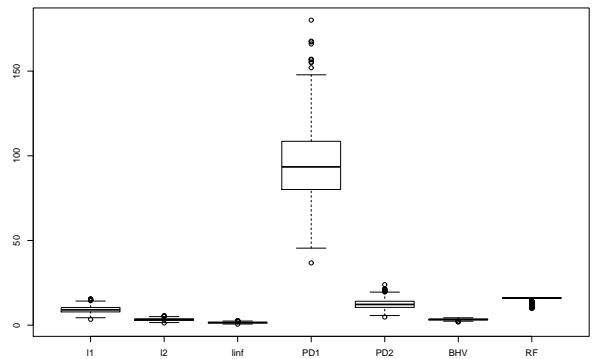}
  \caption{Comparison of  different metrics  for random trees with $n=10$. Above from upper left:  $D_1$ , $D_2$, $ D_\infty$, $D^{PD}_1$, $D_2^{PD}$, the geodesic and  the Robinson-Foulds metric. Below, the distributions are presented in boxplots. }
  \label{fig:distcont}
\end{figure}
\begin{figure}
  \centering

  \includegraphics[angle=-90,width=0.6\textwidth]{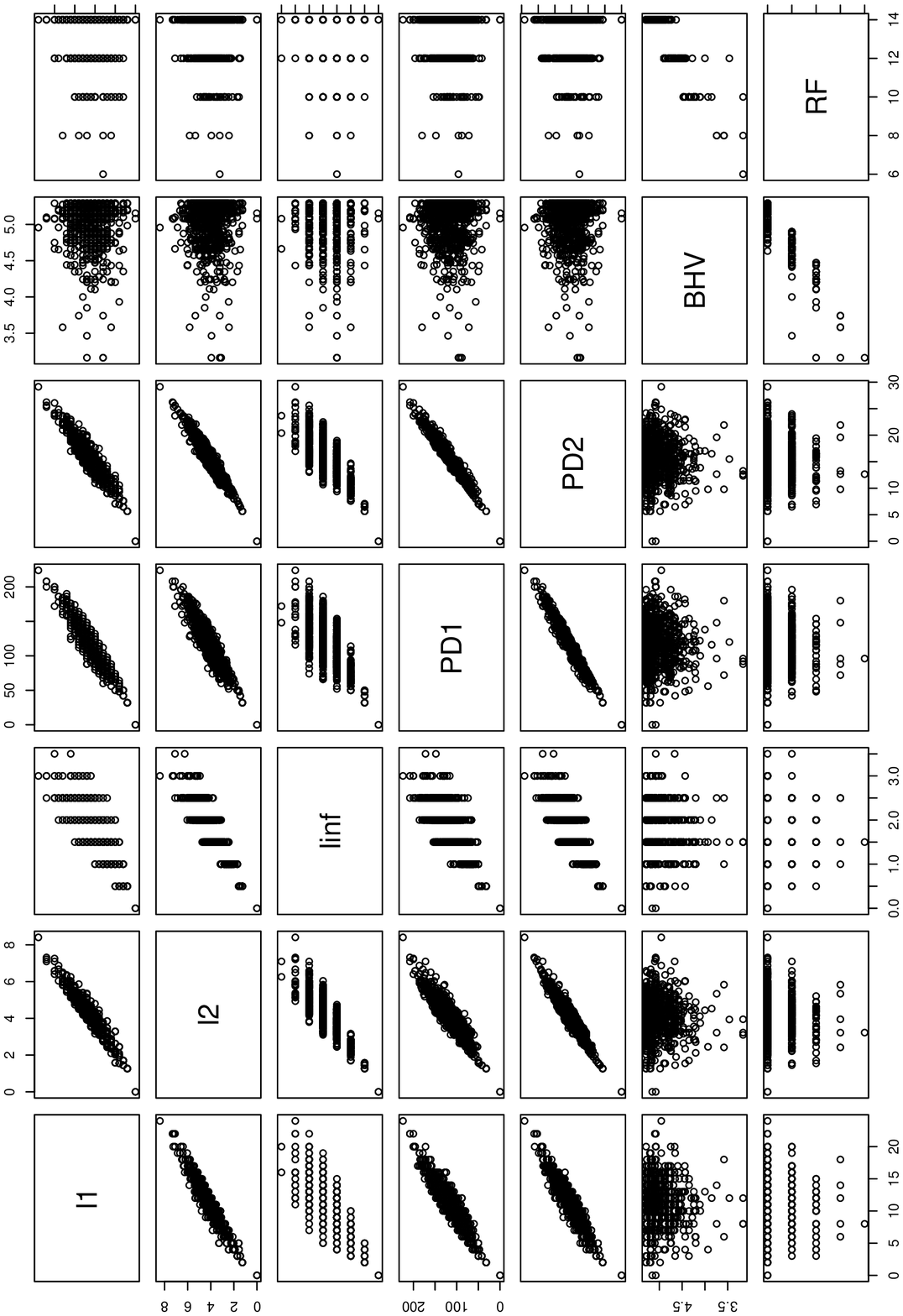}

  \includegraphics[angle=-90,width=0.6\textwidth]{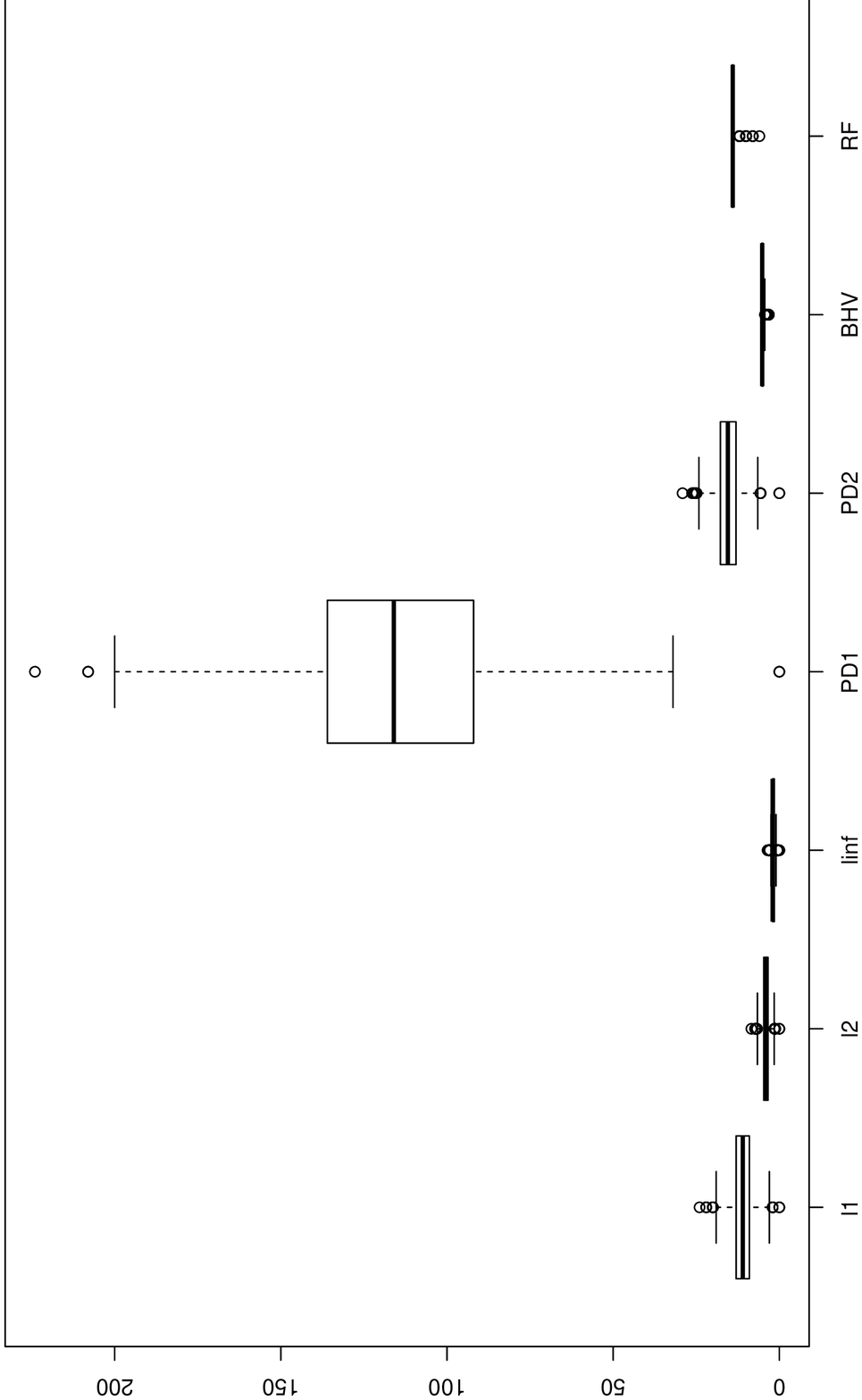}

  \includegraphics[angle=-90,width=0.6\textwidth]{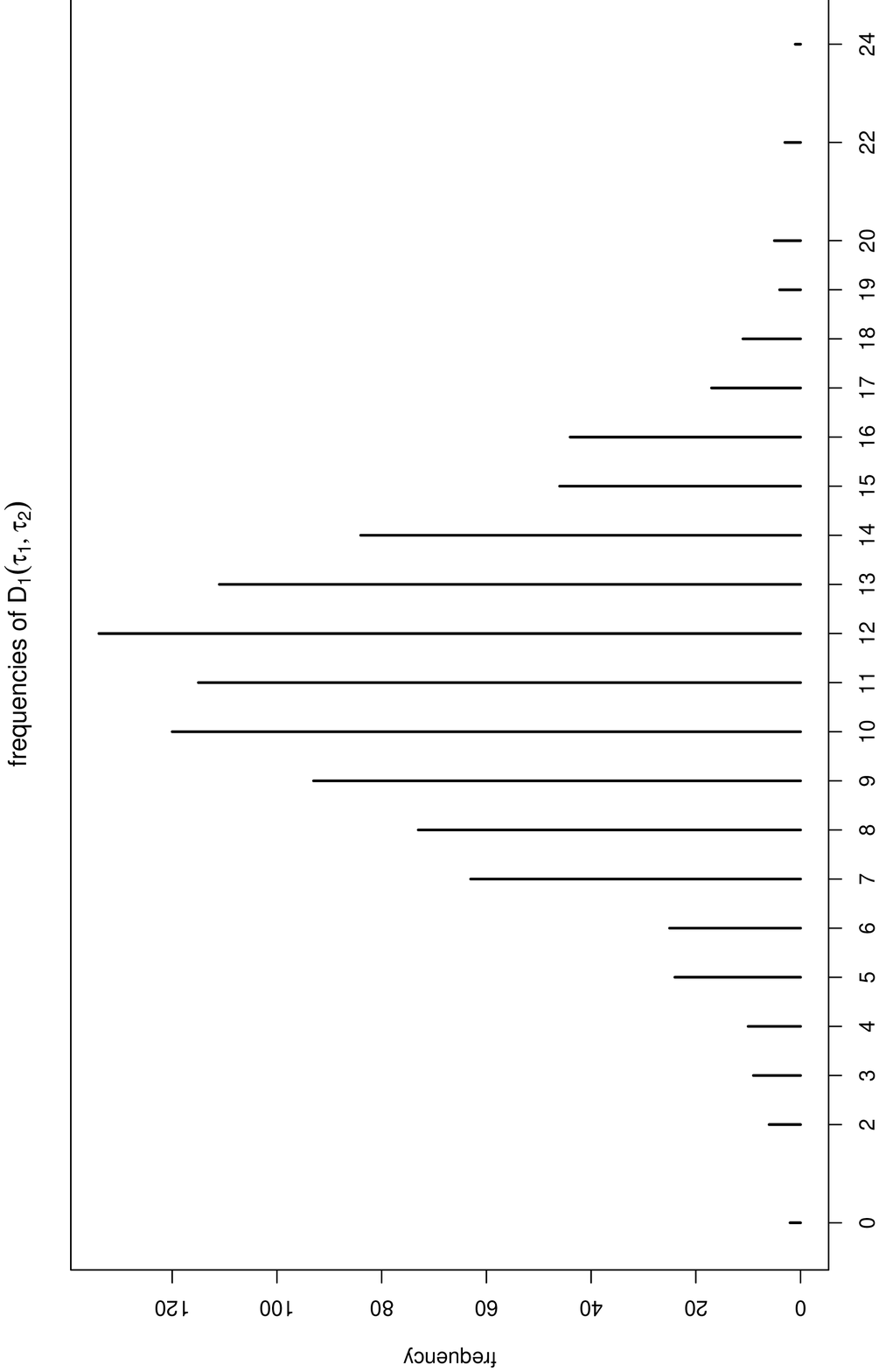}

  \caption{Comparison of  different metrics  for random  unweighted trees with $n=10$. Above from upper left:  $D_1$ , $D_2$, $ D_\infty$, $D^{PD}_1$, $D_2^{PD}$, the geodesic and  the Robinson-Foulds metric. In the middle, the distributions are presented in boxplots. At the bottom, the frequency table of $D_1$ is presented. }
  \label{fig:distdiscr}
\end{figure}
\begin{figure}
  \centering

  \includegraphics[angle=-90,width=0.6\textwidth]{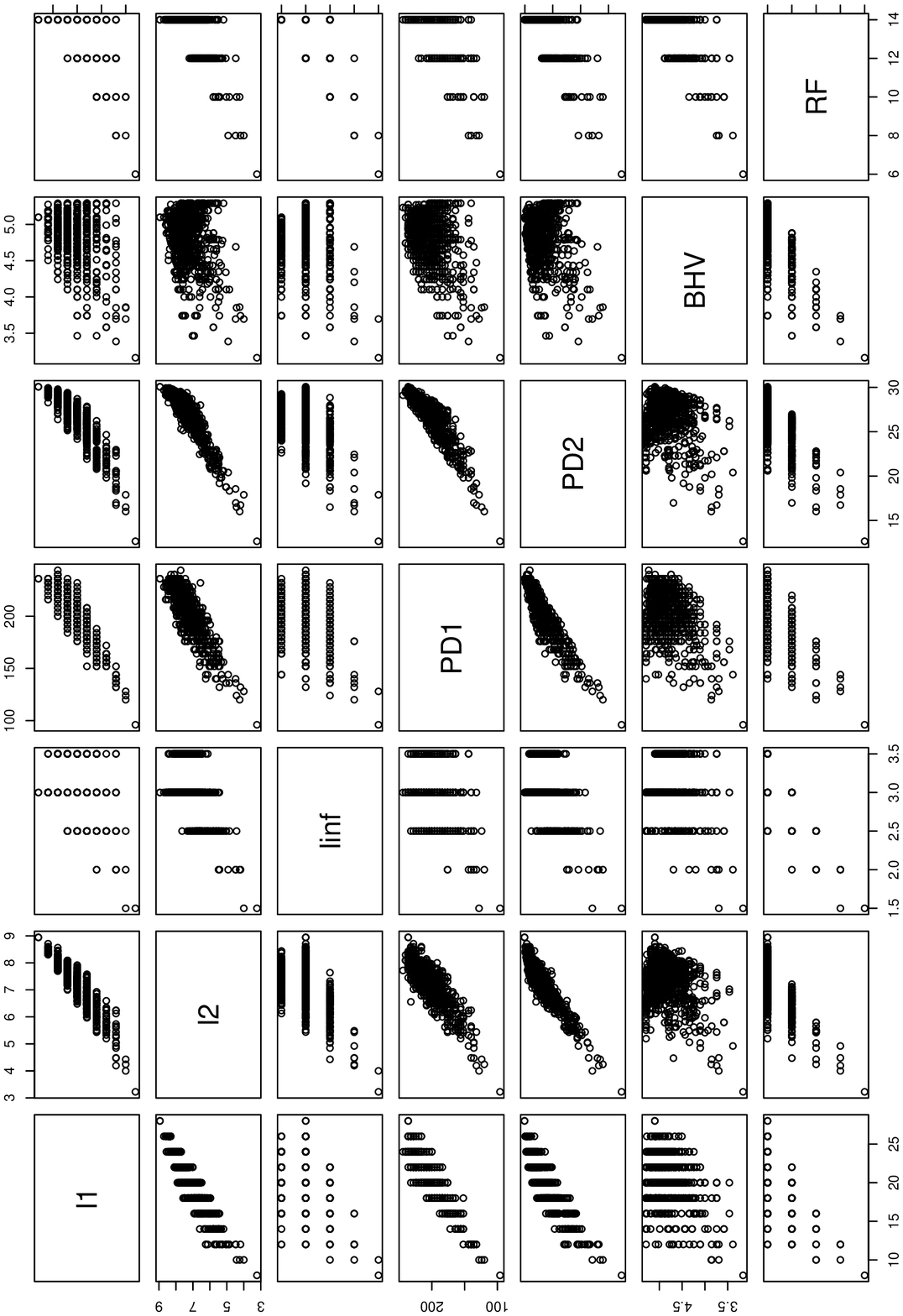}

  \includegraphics[angle=-90,width=0.6\textwidth]{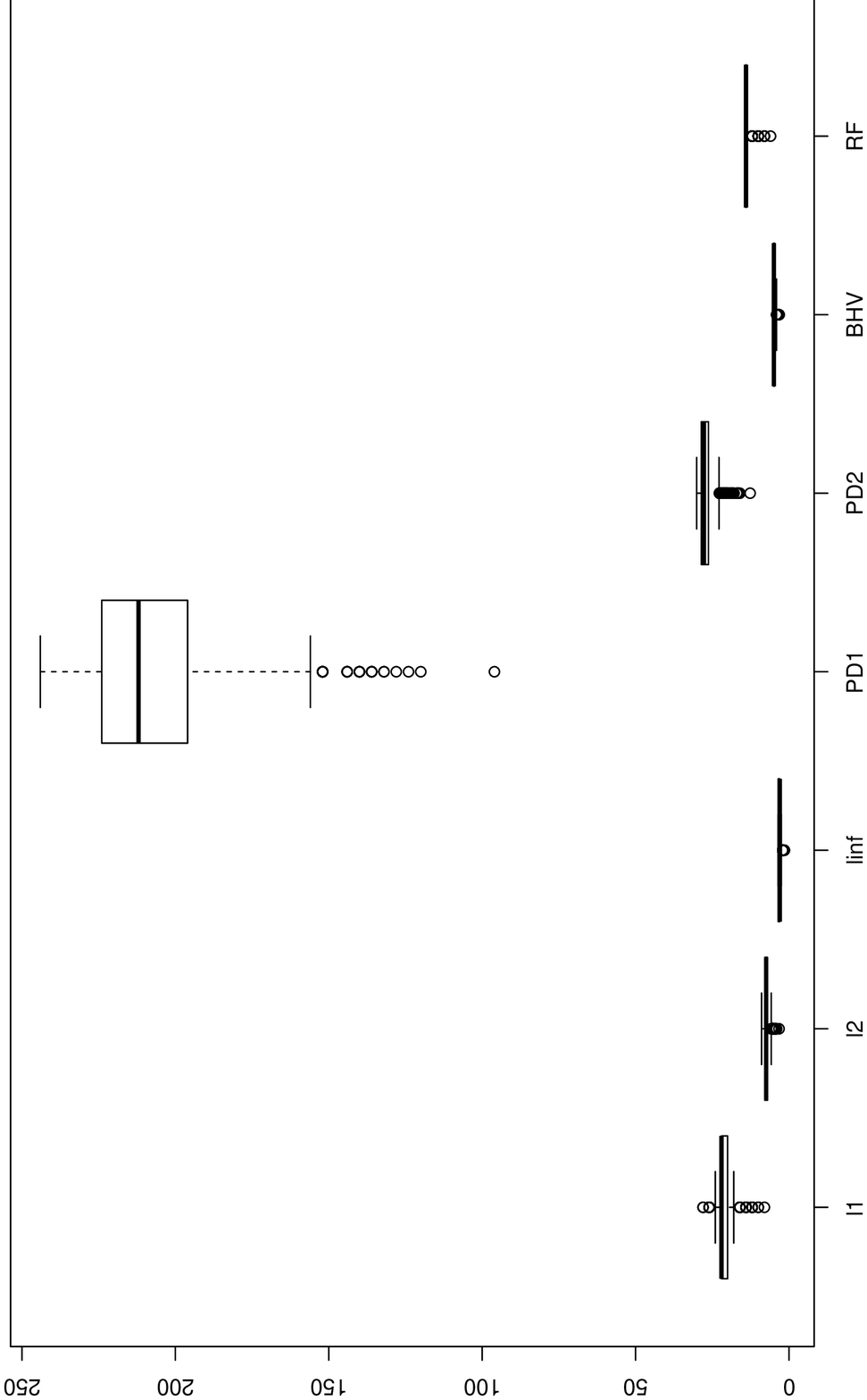}

  \includegraphics[angle=-90,width=0.6\textwidth]{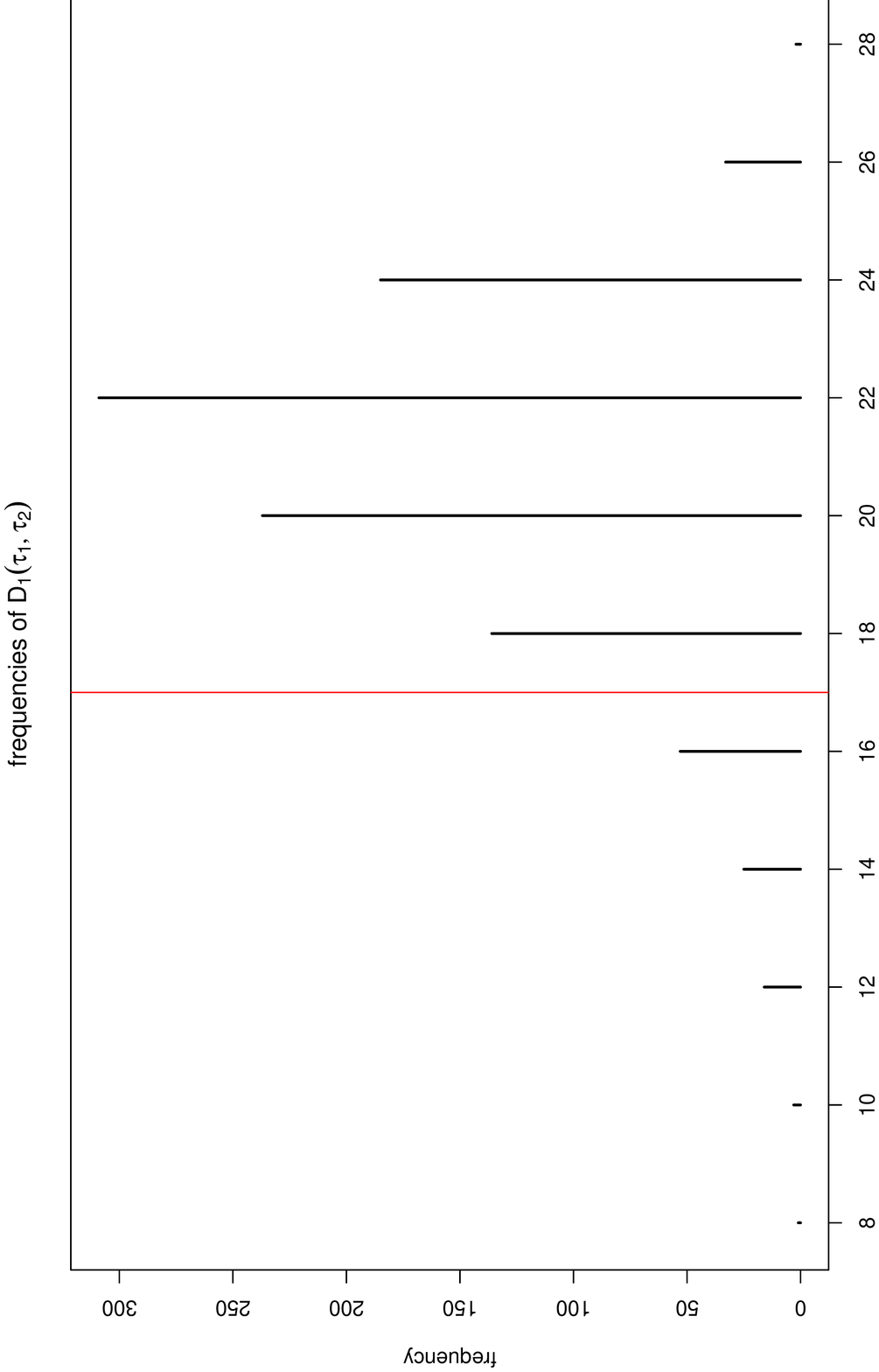}

  \caption{Comparison of  different metrics  for random  caterpillars with $n=10$. Above from upper left:  $D_1$ , $D_2$, $ D_\infty$, $D^{PD}_1$, $D_2^{PD}$, the geodesic and  the Robinson-Foulds metric. In the middle, the distributions are presented in boxplots. At the bottom, the frequency table of $D_1$ is presented with the lower bound from Lemma \ref{lem:cater} in red. }
  \label{fig:distcaterpillar}
\end{figure}
\section{Discussion}

What have we achieved? We constructed at least two new biocomputable metrics for comparing unrooted, but possibly weighted, phylogenetic trees. We think this approach is valuable and could generalise well.   One direction is the extension to rooted trees. We should then just measure the distance of the induced metrics on $X\cup\set{root}$. Another generalisation could be phylogenetic networks. Outside phylogeny, there should be applications to other  kinds of finite labeled metric spaces. At the moment,  we are only  aware of the papers of F.Memoli, e.g.  \cite{Mem07}, which deals with $\ell^p-$type Gromov-Hausdorff  metrics.  

In general, we follow \cite{SP93} in arguing that there is no universal  metric for phylogenetic trees which suits perfectly for all purposes. We think that every application has its own choice, and we added a further choice to this portfolio. Yet, we should discuss further  properties of phylogenetic metrics to guide the users. Monotony as considered in section \ref{sec:monoton} is a, yet trivial, beginning in this direction.  Here  we   want to discuss  some important results of the present paper and possible extensions only.

It looks interesting to extend  the metric to tree shapes, with allowing the labels to be permuted. Still that metric differs from the Gromov-Hausdorff metric since we allow only matching of the labels in contrast to the weaker version in (\ref{eq:Gromovhere}).  For the Gromov-Hausdorff distance it is shown in   \cite{PW94} that it is again NP-hard to compute it.   We expect the same for the permutation approach. 

One important topic which raised up already in \cite{BM12,LRM12,DG15,Ken15} is the question how to \emph{weight} the edges of the trees. We obtained natural weights from our approach in Example \ref{ex:weights1tree}. If those weights do not fit the intention of the applicant, it is easy to shorten or lengthen the edges of the trees and obtain other metric spaces which could be easily compared.  There is also the possibility to weight the labels, for instance to account for uneven sampling. Then we could adjust to this by weighting the $\norm\cdot_i$ norms which leads again to similar computations. Note that we met already such a weighted approach in the computations in the Examples \ref{ex:weights1tree} and \ref{ex:ll'}.    Further, also a Kantorovich-Wasserstein approach similar to \cite{Mem07} might be feasible if the weights of the leaves differ between the trees.    In summary, our approach is natural but can be well adjusted to the needs of applications.    

We showed several properties of the new metrics including compatibility with the NNI-metric, a lot of estimates with the pathwise difference metrics, local properties related to the lower bound metrics $\tilde D_i$, and monotony.  Of course, there are many more questions in this context. Especially we would like to sharpen the  estimates. We do not know much about the $1-$neighbourhoods on $T_1^2(X)$, e.g. whether there are islands in the sense of \cite{BM12}.  There are a lot of connections with the quartet, SPR-,TBR-, maximum parsimony, weighted matching   and BHV-metrics to explore, too. Numerical comparison was done for the \texttt{R}-implemented distances only. 

 We expect the  diameter between two unweighted $X$-trees to be realised by  caterpillar trees. The simulation result in Figure \ref{fig:distcaterpillar} points into this direction. A more sharp estimate than provided in  Lemma \ref{lem:upperdiameter} and Lemma \ref{lem:cater} would be quite interesting, too. It is still not clear whether and  why $D_1$ or $\tilde D_1$ takes integers values only on $T_1^2(X)$.  

The geometry induced by the euclidean type metrics $D_2,\tilde D_2$ should be further explored, too. It should  be interesting to prove it is locally euclidean and to find out how the geodesics look like. Possibly, the geodesic distance with respect to  $D_2$ is even another metric.

Most interesting we find the question whether $D_i=\tilde D_i$. Provable equality could  save some computing time, at least.  For the time until this problem is solved, we just know  there are new  animals in the zoo of phylogenetic distances \dots  but not,   how many.  

\paragraph{Acknowledgements} First of all, I have to thank  Mareike Fischer for introducing me to the world of phylogenetic distances. She helped also a lot for getting a clear notation.  Second, I'm very grateful to Jürgen Eichhorn who unconsciously draw my attention to metrics between metric spaces. Third, I'd like to thank Michelle Kendall for her inspiring talk at the Portobello conference 2015 and additional discussion later. Fourth, I thank Mike Steel for many interesting discussions, useful hints, his kind hospitality during my stay in Christchurch 2010,  and for the organisation of the amazing 2015 workshop in Kaikoura with an inspiring and open atmosphere. Further,  Andrew Francis, Alexander Gavryushkin, Stefan Grünewald, Marc Hellmuth and  Giulio dalla Riva  gave useful hints and inspiration in many  discussions.

\appendix
\section{On metric extensions}
\label{sec:metrext}
Several times we met the problem whether  a partial dissimilarity on $X$, i.e. a map  $\map qE\NRp$, $E\subseteq\binom X2$ has an extension to a metric on $X$. This seems to be a well-known problem, one folklore solution I found in \cite{Gue04}:

\begin{theorem}\label{th:metext}
  If the graph $G=(X,E)$ is simple and connected then $\map qE\NRp$ extends to a semimetric on $X$ if and only if for all $x,y\in X$, $\set{x,y}\in E$,  $q(x,y)=d^q_G(x,y)$. 
\end{theorem}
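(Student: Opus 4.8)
The plan is to show that the graph metric $d_G^q$ itself is the natural candidate for the extension, and that the stated edge condition is exactly what is needed for it to genuinely extend $q$. First I would record the general fact that whenever $G=(X,E)$ is connected, the function $d_G^q$ from (\ref{eq:dqG}) is automatically a semimetric on $X$: connectedness makes all values finite, symmetry and $d_G^q(x,x)=0$ are immediate from the definition, and the triangle inequality holds because concatenating a path from $x$ to $z$ with one from $z$ to $y$ produces a path from $x$ to $y$ whose length is the sum of the two lengths. Since the weights are nonnegative, the infimum in (\ref{eq:dqG}) may be restricted to simple paths, of which there are finitely many, so it is in fact attained.

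For the direction ($\Leftarrow$) the work is then essentially already done: assuming $q(x,y)=d_G^q(x,y)$ for every edge $\set{x,y}\in E$, the function $d_G^q$ is a semimetric on $X$ by the observation above, and by hypothesis its restriction to $E$ agrees with $q$. Hence $d_G^q$ is the desired extension and nothing further is required.

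For the direction ($\Rightarrow$), I would assume a semimetric $\rho\in M(X)$ with $\rho(x,y)=q(x,y)$ for all $\set{x,y}\in E$ and fix an edge $\set{x,y}\in E$. The single-edge path $(x,y)$ has length $q(x,y)$, giving $d_G^q(x,y)\le q(x,y)$ at once. For the reverse inequality I would take an arbitrary path $x=x_0x_1\dots x_m=y$ in $G$ and apply the triangle inequality for $\rho$ repeatedly (telescoping):
\begin{displaymath}
  \mathrm{len}(x_0x_1\dots x_m)=\sum_{i=1}^m q(x_{i-1},x_i)=\sum_{i=1}^m\rho(x_{i-1},x_i)\ge\rho(x_0,x_m)=q(x,y).
\end{displaymath}
Taking the infimum over all such paths yields $d_G^q(x,y)\ge q(x,y)$, and combining the two inequalities gives $q(x,y)=d_G^q(x,y)$.

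This result is essentially routine, so I do not expect a genuine obstacle. The only points needing care are structural: connectedness is used precisely to guarantee finiteness of the graph distances, nonnegativity of $q$ is what lets me reduce to simple paths so the infimum is a minimum, and simplicity of $G$ rules out degenerate multi-edges. The telescoping step in ($\Rightarrow$) is the conceptual crux, but it is nothing more than an iterated triangle inequality and presents no real difficulty.
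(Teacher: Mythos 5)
Your proof is correct. Note, however, that the paper does not actually prove Theorem \ref{th:metext} at all: it is stated as a folklore result with a pointer to \cite{Gue04}, and the paper's own proof effort is reserved for the sharpened criterion in Theorem \ref{th:metextmin}, which is established by a chord-substitution argument on non-minimal cycles. So there is no in-paper proof to compare against; what you have written is the standard argument that the citation stands in for, and it is exactly the right one. Your two directions are the natural ones: for ($\Leftarrow$) the only content is that $d_G^q$ from (\ref{eq:dqG}) is itself a semimetric (finiteness from connectedness, triangle inequality from concatenation of paths, with the small but correctly handled point that nonnegativity of $q$ lets one pass between walks and simple paths), and for ($\Rightarrow$) the telescoped triangle inequality $\rho(x_0,x_m)\le\sum_{i=1}^m\rho(x_{i-1},x_i)$ forces any extending semimetric to dominate nothing less than $q$ on edges, while the one-edge path gives the reverse bound. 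It is worth observing that your argument is also the implicit foundation of the paper's proof of Theorem \ref{th:metextmin}: the equivalence between the edge condition $q(x,y)=d_G^q(x,y)$ and the cycle condition (\ref{eq:boundedgelength}) over \emph{all} cycles is precisely your telescoping step (a short path from $x$ to $y$ plus the edge $\set{x,y}$ is a cycle violating (\ref{eq:boundedgelength}), and conversely), after which the paper only needs to reduce arbitrary cycles to minimal ones. So your writeup in fact supplies a step the paper leaves to the literature.
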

The graph metric $d^q_G$ was introduced in (\ref{eq:dqG}).

Although this presents a complete solution of the extension problem  we want to sharpen this criterion for improved   applicability. Still the next result should be folklore but I could not find it in literature.   If $p=x_0x_1\dots x_m$ is a cycle in  a graph $(X,E)$ we call any pair $\set{x_i,x_j}\in E$, $0\le i,j\le m-1, 2\le\abs{i-j}\le m-2 $ a chord of $p$. A cycle $p$ without chord is called minimal cycle. 

\begin{theorem}\label{th:metextmin}
  If the graph $G=(X,E)$ is simple and connected, then $\map qE\NRp$ extends to a metric on $X$ if and only if for all minimal cycles $p$ of $G$ and all edges $\set{x,y}$ in $p$
  \begin{equation}
    \label{eq:boundedgelength}
    2q(x,y)\le \mathrm{len}(p).   
  \end{equation}

\end{theorem}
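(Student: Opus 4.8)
The plan is to derive the statement from the semimetric extension criterion of Theorem \ref{th:metext} by turning the "every edge equals its graph distance" condition into a statement about cycles, and then showing that only chordless cycles need to be tested. First I would fix the candidate extension: since $G$ is connected and $q\ge 0$, any semimetric $\rho$ extending $q$ satisfies $\rho(x,y)\le\mathrm{len}(p)$ for every path $p$ joining $x,y$ (by the triangle inequality summed along $p$), hence $\rho\le d^q_G$ with $d^q_G$ as in (\ref{eq:dqG}). Thus $d^q_G$ is the largest conceivable extension, and an extension exists if and only if $d^q_G$ itself is one. By Theorem \ref{th:metext} this happens exactly when $q(x,y)=d^q_G(x,y)$ on every edge, i.e. $q(x,y)\le\mathrm{len}(p)$ for every path $p$ between the endpoints of $\{x,y\}$.

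Next I would reformulate this "all paths" condition as an "all simple cycles" condition. As the weights are nonnegative, the minimum defining $d^q_G$ is attained along simple paths, and a simple path $p$ from $x$ to $y$ avoiding the edge $\{x,y\}$, joined with that edge, is precisely a simple cycle $C$ through $\{x,y\}$; writing $\mathrm{len}(C)=q(x,y)+\mathrm{len}(p)$, the bound $q(x,y)\le\mathrm{len}(p)$ is exactly $2q(x,y)\le\mathrm{len}(C)$, which is (\ref{eq:boundedgelength}). So the semimetric condition is equivalent to (\ref{eq:boundedgelength}) for \emph{all} simple cycles, and the whole content of the theorem is that it suffices to verify it on \emph{minimal} (chordless) cycles.

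The heart of the argument is this reduction, which I would prove by strong induction on the number of edges of the cycle. A chordless cycle is covered directly by the hypothesis. If a simple cycle $C$ carries a chord $\{u,v\}$, this chord splits $C$ into two arcs $P_1$ (from $u$ to $v$) and $P_2$ (from $v$ to $u$) and yields two simple sub-cycles $C_1=P_1+\{u,v\}$ and $C_2=P_2+\{u,v\}$. By the definition of a chord, $u$ and $v$ are non-adjacent along $C$, so $P_1,P_2$ each have at least two edges and hence $C_1,C_2$ are strictly shorter than $C$; the inductive hypothesis applies to both. For an edge $\{x,y\}$ of $C$, say on $P_1$, induction on $C_1$ gives $2q(x,y)\le\mathrm{len}(C_1)=\mathrm{len}(P_1)+q(u,v)$, while induction on $C_2$ applied to the chord gives $2q(u,v)\le\mathrm{len}(C_2)=\mathrm{len}(P_2)+q(u,v)$, i.e. $q(u,v)\le\mathrm{len}(P_2)$; substituting yields $2q(x,y)\le\mathrm{len}(P_1)+\mathrm{len}(P_2)=\mathrm{len}(C)$. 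I expect this step to be the only real subtlety, the key point being that the chord weight itself must be controlled, and that this control comes for free from the \emph{second} sub-cycle; the rest is bookkeeping.

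Finally, for the upgrade from semimetric to metric I would note that $d^q_G(x,y)>0$ for all $x\ne y$ precisely when no path between distinct vertices has length zero, which holds as soon as the edge weights are positive. Under that assumption the extension constructed above is automatically a genuine metric, so (\ref{eq:boundedgelength}) over minimal cycles is equivalent to metric extendability; if some edge weight vanishes one only obtains a semimetric, so positivity of $q$ is the hypothesis under which the metric statement is read.
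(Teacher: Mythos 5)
Your proposal is correct and is essentially the paper's own argument: the paper proves exactly your chord-splitting reduction, phrased as a minimal-counterexample contradiction rather than strong induction, applying the bound (\ref{eq:boundedgelength}) to the chord in one sub-cycle and to the given edge in the other and then substituting, just as you do. The only difference is presentational: you also spell out the bridge from Theorem \ref{th:metext} (the all-paths, hence all-simple-cycles, reformulation) and the semimetric-versus-metric reading, both of which the paper leaves implicit.
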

\begin{proof}
We assume the opposite. Thus we find a (non-minimal) cycle $p=x_0x_1,\dots x_m=x_0$ such that $e=\set{x_0,x_1}$  violates (\ref{eq:boundedgelength}). We may assume w.l.o.g. that the length of $p$, $m$ is minimal.  

Non-minimality of $p$ implies that there is a chord $\set{x_i,x_j}$ of $p$. Since $m$ is minimal,
we know
\begin{displaymath}
d(x_i,x_j)\le\sum_{k=i}^{j-1}d(x_k,x_{k+1})
\end{displaymath}
 and 
 \begin{displaymath}
d(x_0,x_1)\le \sum_{k=1}^{i-1}d(x_k,x_{k+1})+d(x_i,x_j)+\sum_{k=j}^{n-1}d(x_k,x_{k+1})
\end{displaymath}
Substituting the first inequality into the RHS of the second one yields 
\begin{displaymath}
  d(x_0,x_1)\le \sum_{k=1}^{n-1}d(x_k,x_{k+1}).
\end{displaymath}
This  is (\ref{eq:boundedgelength}). This contradiction completes the proof. 
\end{proof}

We can use this result for the  
\begin{proof}[Proof of Theorem \ref{th:quadrin}]
 We are using Theorem \ref{th:metextmin} below on $X\cup X'$ with $E=\binom X2\cup \binom{X'}2\cup\set{\set{x,x'}:x\in X}$. The minimal cycles in $(X\cup X')$ are either  triangles in $X$, triangles in $X'$ or rectangles $x,y,y',x'$. For the two former, (\ref{eq:boundedgelength}) is equivalent to  the triangle inequalities for $\rho,\rho'$. For the latter, (\ref{eq:boundedgelength}) is the same as  (\ref{eq:quadr}).
\end{proof}

The following result was used in the proof of Theorem \ref{th:metrics}.
\begin{lemma}\label{lem:biextension}
  Suppose $X,Y,Z$ are  disjoint sets and there are given $d_1\in M(X\cup Y)$ and $d_2\in M(Y\cup Z)$ such that $d_1|_{\binom Y2}=d_2|_{\binom Y2}$. Then there exists a $d\in M(X\cup Y\cup Z)$ such that $d|_{\binom {X\cup Y}2}=d_1$ and $d|_{\binom {Y\cup Z}2}=d_2$.
\end{lemma}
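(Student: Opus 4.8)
The plan is to realise the desired $d$ as the shortest-path (graph) metric of a weighted graph assembled from $d_1$ and $d_2$, and then to invoke Theorem \ref{th:metext}. Concretely, on the vertex set $X\cup Y\cup Z$ I would take the edge set $E=\binom{X\cup Y}2\cup\binom{Y\cup Z}2$; the only pairs \emph{not} joined by an edge are those with one endpoint in $X$ and the other in $Z$. On $E$ define the partial dissimilarity $q$ by $q=d_1$ on $\binom{X\cup Y}2$ and $q=d_2$ on $\binom{Y\cup Z}2$. This is well defined precisely because $d_1$ and $d_2$ agree on $\binom Y2$, which is exactly the hypothesis. Note that every within-block pair is already an edge, so $\binom{X\cup Y}2,\binom{Y\cup Z}2\subseteq E$, and only cross pairs are missing.

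Assuming $Y\neq\emptyset$, the graph $G=(X\cup Y\cup Z,E)$ is simple and connected (any $x\in X$ reaches any $z\in Z$ via the path $x\,y\,z$ through any $y\in Y$), so Theorem \ref{th:metext} applies. The graph metric $d^q_G$ is automatically a semimetric on a connected graph, and the theorem says it restricts to $q$ on the edges exactly when $q(\set{a,b})=d^q_G(a,b)$ for every edge $\set{a,b}$. Since all within-block pairs are edges, verifying this condition is the \emph{same} as proving $d^q_G=d_1$ on $\binom{X\cup Y}2$ and $d^q_G=d_2$ on $\binom{Y\cup Z}2$; granting it, $d:=d^q_G$ is the required extension.

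The heart of the argument is the resulting no-shortcut claim, say $d^q_G(a,b)=d_1(a,b)$ for $a,b\in X\cup Y$. I would take any path $p$ from $a$ to $b$ in $G$ and reroute it through $X\cup Y$ without increasing its length. Every maximal excursion of $p$ into $Z$ must enter and leave through vertices $y_{\mathrm{in}},y_{\mathrm{out}}\in Y$, there being no $X$--$Z$ edges; its length is a sum of $d_2$-steps, hence at least $d_2(y_{\mathrm{in}},y_{\mathrm{out}})=d_1(y_{\mathrm{in}},y_{\mathrm{out}})$ by the triangle inequality in $(Y\cup Z,d_2)$. Replacing each excursion by the single edge $\set{y_{\mathrm{in}},y_{\mathrm{out}}}$ (or deleting it when the endpoints coincide) yields a path lying in $X\cup Y$ of no greater length, whose length is $\ge d_1(a,b)$ because $q=d_1$ is already a semimetric there; and the single edge $\set{a,b}$ attains $d_1(a,b)$. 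Hence $d^q_G(a,b)=d_1(a,b)$, and the symmetric argument gives $d^q_G=d_2$ on $Y\cup Z$.

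The main obstacle is exactly this no-shortcut verification: one must rule out that routing through the foreign block can ever shorten a connection between two points of the home block, which the triangle inequality of the respective semimetric guarantees. The only remaining gap is the degenerate case $Y=\emptyset$, where $G$ is disconnected and Theorem \ref{th:metext} does not apply. There I would simply set $d(x,z)=C$ for all $x\in X$, $z\in Z$ with any constant $C\ge\frac12\max(\mathrm{diam}_{d_1}X,\mathrm{diam}_{d_2}Z)$, and check directly the handful of triangle inequalities that involve a single cross-pair.
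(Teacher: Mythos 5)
Your proof is correct, but it routes through the paper's extension machinery differently than the paper itself does. You build exactly the same weighted graph as the paper (vertex set $X\cup Y\cup Z$, edge set $\binom{X\cup Y}2\cup\binom{Y\cup Z}2$, weights $d_1$ resp.\ $d_2$, well defined by the compatibility $d_1|_{\binom Y2}=d_2|_{\binom Y2}$), but you then invoke the folklore criterion of Theorem \ref{th:metext} and do the hard work by hand: you verify the no-shortcut condition $q=d^q_G$ on every edge by rerouting each maximal excursion into the foreign block through its $Y$ entry and exit vertices, using the triangle inequality of $d_2$ (resp.\ $d_1$). The paper instead applies its sharpened criterion, Theorem \ref{th:metextmin}: since $X\cup Y$ and $Y\cup Z$ are cliques, the only chordless cycles are triangles (a chordless cycle of length at least $4$ would require two disjoint $X$--$Z$ non-edges, forcing two $Y$-vertices on the cycle which then span a chord), and for a triangle condition (\ref{eq:boundedgelength}) is nothing but the triangle inequality for $d_1$ or $d_2$, which holds by hypothesis. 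So the paper's proof is three lines because the minimal-cycle theorem has already absorbed precisely the path-rerouting argument you carry out explicitly; your version buys self-containedness, needing only the weaker shortest-path criterion. A small bonus of your write-up: you notice and patch the degenerate case $Y=\emptyset$, in which the graph is disconnected and neither extension theorem applies; the paper passes over this silently (harmlessly, since in its application $Y=X'$ is a nonempty copy of the taxon set).
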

\begin{proof}
  Now we apply  the theorem to the graph $(X\cup Y\cup Z,\binom {X\cup Y}2\cup\binom {Y\cup Z}2)$ with $w(u,v)=\left\{
    \begin{array}[c]{cl}
d_1(u,v)&u,v\in X\cup Y\\
d_2(u,v)&u,v\in Y\cup Z
\end{array}\right.
$. Since both $X\cup Y$ and $Y\cup Z$ are cliques in this graph, the only minimal cycles are triangles. For them (\ref{eq:boundedgelength}) is fulfilled by definition of $w$. 
\end{proof}

\begin{figure}[b]
  \centering

  \includegraphics[angle=-90,width=0.6\textwidth]{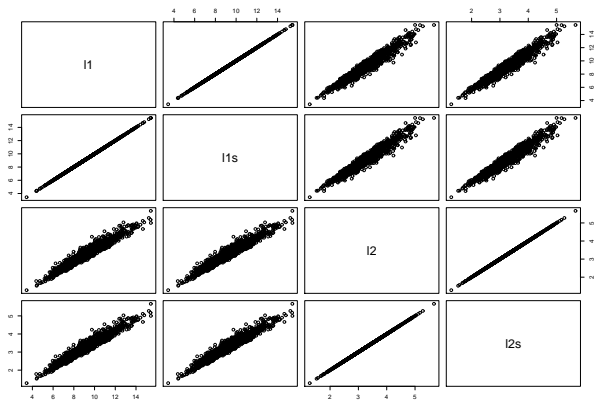}
  \caption{Equality of $D_i$ with $\tilde D_i$, $i=1,2$ for random trees with $n=10$}
  \label{fig:gleich}
\end{figure}

\end{document}